\documentclass[12pt]{amsart}
\usepackage{amsfonts,amsmath,amssymb,amscd,mathrsfs}
\usepackage{xcolor}
\usepackage{verbatim}
\usepackage{enumitem}
\usepackage{hyperref}
\usepackage{tikz-cd}
\newtheorem{theorem}{Theorem}[section]
\newtheorem{lemma}[theorem]{Lemma}
\newtheorem{corollary}[theorem]{Corollary}
\newtheorem{proposition}[theorem]{Proposition}

\theoremstyle{definition}
\newtheorem{definition}[theorem]{Definition}

\newtheorem{remark}[theorem]{Remark}

\theoremstyle{remark}

\def \D{\mathbb{D}}

\def \C{\mathbb{C}}
\def \T{\mathbb{T}}

\newcommand{\clh}{\mathcal{H}}
\newcommand{\clm}{\mathcal{M}}
\newcommand{\cln}{\mathcal{N}}

\newcommand{\clk}{\mathcal{K}}

\newcommand{\ran}{\mathrm{ran}}

\newcommand{\vp}{\varphi}

\newcommand{\ol}{\overline}
\newcommand\restr[2]{\ensuremath{\left.#1\right|_{#2}}}

\numberwithin{equation}{section}

\subjclass[2020]{Primary 47B35; Secondary 47A15, 47A05, 30D55}

\keywords{Weighted dual truncated Toeplitz operator, nearly invariant subspace, model space, complex symmetric operator, weighted Hankel operator, zero-product theorem}
\date{\today}
\begin{document}
\title[Weighted DTTOs on nearly invariant subspaces]
{Nearly invariant subspaces and weighted dual truncated Toeplitz operators}
\author{Sudip Ranjan Bhuia}
\address{Department of Mathematics, Shiv Nadar University, NH91, Tehsil Dadri, Greater Noida, Gautam Buddha Nagar, Uttar Pradesh-201314, India}
\email{sudipranjanb@gmail.com; sudip.bhuia@snu.edu.in}

\begin{abstract}
Let $\clm=h\clk_u$ be a nearly $S^*$-invariant subspace of $H^2$, where $\clk_u=H^2\ominus uH^2$ and $h$ is the extremal multiplier. For $\vp\in L^\infty(\T)$, we study the compression
\[
D_\vp^\clm = \restr{P_{\clm^\perp}M_\vp}{\clm^\perp},
\]
called a weighted dual truncated Toeplitz operator. When $h\equiv1$, this reduces to the classical dual truncated Toeplitz operator.

Using the Hartmann--Ross projection formula, we prove
\[
\|D_\vp^\clm\|=\|\vp\|_\infty,
\]
characterize compactness, and show that the natural multiplication map by $h$ identifies the weighted and classical theories precisely when $h$ is inner. We also establish complex symmetry and obtain block matrix, defect, and semi-commutator identities via weighted truncated Hankel operators. As a main algebraic consequence, we prove
\[
D_\vp^\clm D_\psi^\clm=0
\quad\Longleftrightarrow\quad
\vp=0\ \text{or}\ \psi=0
\quad\text{a.e. on }\T.
\]
Finally, we derive a a rank-at-most-two correction formula and a finite-rank displacement identity for the generalized dual shift
$D_z^\clm$.
\end{abstract}
\maketitle

\section{Introduction}

Toeplitz operators on the Hardy space $H^2$ over the unit circle $\T$ form one of the central classes in function-theoretic operator theory. Their algebraic, spectral, and geometric properties connect complex function theory, harmonic analysis, and operator theory. Classical results such as
the Brown--Halmos theorem, Coburn's alternative, and the study of finite sums and products of Toeplitz operators have led to a large body of work; see, for example, \cite{Brown:Douglas, Ding, Gorkin:Zheng, Guo:Zheng, Halmos:PI, Zheng}. The interaction between Toeplitz operators, Hankel operators, compactness, essential spectra, and Fredholm theory continues to play a fundamental role in the modern development of the subject; see also \cite{Douglas:Book, Garcia:PIM, Ma:Yan:Zheng, Peller:Book}.

A major refinement of Toeplitz theory arises by replacing the Hardy space with a model space. If $u$ is a nonconstant inner function, the model space
\[
\clk_u=H^2\ominus uH^2
\]
is a closed backward-shift invariant subspace of $H^2$. The compression of a multiplication operator to $\clk_u$ is a truncated Toeplitz operator (TTO), introduced and studied systematically by Sarason \cite{Sarason2007}. The orthogonal complement of a model space gives rise to a dual theory. Since
\[
\clk_u^\perp=L^2(\T)\ominus\clk_u=uH^2\oplus H^2_{-},
\]
one can compress multiplication operators to $\clk_u^\perp$. For $\vp\in L^\infty(\T)$, the corresponding dual truncated Toeplitz operator (DTTO) is
\[
D_\vp=\restr{(I-P_u)M_\vp}{\clk_u^\perp}.
\]
Ding and Sang \cite{Sang:Ding} initiated a systematic study of these operators, obtaining block matrix representations, norm and compactness results, multiplicative identities, and zero-product phenomena. Further work on DTTOs and related compressions includes \cite{Bhuia:Nag, Bhuia:Ramesh:Nag, Camara:Ptak, Camara:Klis:Ptak:2020, Camara:Klis:Ptak:2022, Camara-Ross, Gu, HuEtAl2018, LiSangDing2020, Sang:Qin:Ding:2019, Sang:Qin:Ding, Wang:Zhao:Zheng, Zhu:Wu:Chen:2025}.

Parallel to this theory is the study of nearly $S^*$-invariant subspaces. A closed subspace $\clm\subset H^2$ is called nearly $S^*$-invariant if
\[
f\in\clm,\quad f(0)=0 \quad\Longrightarrow\quad S^*f\in\clm .
\]
By Hitt's theorem \cite{Hitt1988}, every nonzero proper nearly $S^*$-invariant subspace has the form
\[
\clm=h\clk_u,
\]
where $u$ is an inner function with $u(0)=0$, and $h$ is the extremal function. Multiplication by $h$ is an isometry from $\clk_u$ onto $\clm$, but in general it does not extend to a unitary operator on $L^2(\T)$. Sarason's description of isometric multipliers \cite{Sarason1987} and
Hayashi's work on Toeplitz kernels \cite{Hayashi1990} show that these spaces are closely related to Toeplitz kernels and exposed points of the unit ball of $H^1$. For background on model spaces, their operators, and complex symmetric operators, see \cite{GRM, Garcia:Putinar, NF}; for recent work on compressed shifts on nearly invariant subspaces, see \cite{LiangPartington2026}.

The purpose of this paper is to develop the basic operator theory of these compressions and to isolate the mechanism that appears when $\clk_u^\perp$ is replaced by
\[
\clm^\perp=L^2(\T)\ominus h\clk_u.
\]

For $\vp\in L^\infty(\T)$, we define the \emph{weighted dual truncated Toeplitz operator}
\[
D_\vp^\clm=\restr{P_{\clm^\perp}M_\vp}{\clm^\perp}.
\]
When $h\equiv1$, this recovers the classical DTTO $D_\vp$ on $\clk_u^\perp$. For a general extremal multiplier $h$, however, $D_\vp^\clm$ is not, in general, obtained from the classical DTTO through the natural multiplication map associated with $h$. The Hartmann--Ross projection formula
\[
P_\clm f=hP_u(\bar h f)
\]
provides the correct substitute for this missing unitary equivalence; more precisely, if
\[
V:\clk_u\to L^2(\T),\qquad Vk=hk,
\]
is regarded as an isometry with range $\clm$, then
\[
P_\clm=VV^*.
\]

We first establish the basic structure of $D_\vp^\clm$. We prove
\[
\|D_\vp^\clm\|=\|\vp\|_\infty
\]
and show that $D_\vp^\clm$ is compact only for the zero symbol. We then prove a rigidity theorem: multiplication by $h$ implements a unitary equivalence between $D_\vp^\clm$ and the classical DTTO $D_\vp$ precisely when $h$ is inner. We also show that $D_\vp^\clm$ is complex symmetric with
respect to a natural conjugation obtained by twisting the model-space conjugation by $h/\bar h$.

The second part of the paper develops the structural calculus. We obtain block matrix representations with respect to both decompositions
\[
L^2(\T)=\clm\oplus\clm^\perp,
\qquad
\clm^\perp=(H^2\ominus\clm)\oplus H^2_-.
\]
We introduce weighted truncated Hankel operators
\[
H_\vp^\clm
=
P_{\clm^\perp}M_\vp V:
\clk_u\to\clm^\perp
\]
and derive defect and semi-commutator identities. These identities allow us to compare weighted products with the corresponding classical DTTO products. As a principal consequence, we prove that the weighted family has no nontrivial two-factor zero products:
\[
D_\vp^\clm D_\psi^\clm=0
\quad\Longleftrightarrow\quad
\vp=0\ \text{or}\ \psi=0
\quad\text{a.e. on }\T.
\]
This extends the zero-product theorem of Ding and Sang from model-space complements to complements of nearly $S^*$-invariant subspaces.

We finally study the generalized dual shift
\[
D_z^\clm
=
\restr{P_{\clm^\perp}M_z}{\clm^\perp}.
\]
We obtain an explicit rank-at-most-two correction formula and a finite-rank displacement identity involving the compressed shift.

The paper is organized as follows. Section~2 recalls the necessary background on Hardy spaces, model spaces, nearly $S^*$-invariant subspaces, and the Hartmann--Ross projection formula. Section~3 introduces weighted DTTOs and proves boundedness, compactness, rigidity, complex symmetry, and block matrix representations. Section~4 develops weighted truncated Hankel operators, establishes defect and semi-commutator identities, and proves the zero-product theorem. Section~5 studies the generalized dual shift and the associated finite-rank displacement formulas.

\section{Preliminaries}

Let $\D$ denote the open unit disk in $\C$, and $\T = \partial \D$ the unit circle. The space of square-integrable functions on $\T$ with normalized Lebesgue measure $dm = \frac{1}{2\pi} d\theta$ is denoted $L^2(\T)$.

The classical Hardy space $H^2 \subset L^2(\T)$ consists of functions with vanishing negative Fourier coefficients. Its orthogonal complement is the co-analytic space $H_{-}^2 = L^2(\T) \ominus H^2 = \ol{z H^2}$. The orthogonal projections onto $H^2$ and $H_{-}^2$ are denoted $P_{+}$ and $P_{-}$, respectively. 

For $\vp \in L^\infty(\T)$, the multiplication operator $M_\vp : L^2(\T) \to L^2(\T)$ is defined by 
\[M_\vp f = \vp f, \quad f\in L^2(\T). \]
The Toeplitz operator $T_\vp : H^2 \to H^2$ and Hankel operator $H_\vp : H^2 \to H_{-}^2$ are defined by 
\begin{align}
    T_\vp f &= P_{+}(\vp f),\quad f\in H^2; \\
    H_\vp f &= P_{-}(\vp f),\quad f\in H^2.
\end{align}
Their adjoints are given by $T_\vp^* = T_{\bar{\vp}}$ and $H_\vp^* = \restr{P_{+} M_{\bar{\vp}}}{H_{-}^2}$, respectively. 

Consequently, the unilateral forward shift on $H^2$ is the analytic Toeplitz operator $S = T_z$, and its adjoint, the backward shift, is $S^* f = T_{\bar{z}} f = \frac{f(z) - f(0)}{z}$ for $f\in H^2$.

Restricting the multiplication operator $M_\vp$ to $H_{-}^2$ yields two distinct spatial mappings. The compression mapping $H_{-}^2$ to itself defines the \emph{dual Toeplitz operator} $S_\vp = P_{-} \restr{M_\vp}{H_{-}^2}$. Also,
\begin{equation}
    H_{\bar{\vp}}^* = P_{+} \restr{M_\vp}{H_{-}^2}.
\end{equation}

Furthermore, the dual Toeplitz operator is unitarily equivalent to the Toeplitz operator. Let $U : H_{-}^2 \to H^2$ be the canonical unitary operator $(Uf)(z) = \bar{z}f(\bar{z})$, which isometrically maps the orthonormal basis $\bar{z}^n$ to $z^{n-1}$ for $n \ge 1$. More precisely,
\begin{equation}
    U S_\vp U^* = T_{\tilde{\vp}},
\end{equation}
where $\tilde{\vp}(z) = \vp(\bar{z})$. For a comprehensive background on $H^\infty$ functions, Toeplitz operators, and shift-invariant subspaces, we refer the reader to Bercovici \cite{Bercovici}.

A function $u \in H^\infty$ is an inner function if $|u(z)| = 1$ almost everywhere on $\T$. For any nonconstant inner function $u$, the corresponding \emph{model space} is $\clk_u = H^2 \ominus uH^2$. By Beurling's Theorem, these are precisely the proper, non-trivial closed $S^*$-invariant subspaces of $H^2$. As a closed subspace of the Hardy space, $\clk_u$ is naturally a reproducing kernel Hilbert space (RKHS) on the open unit disk $\D$, evaluating at any point $\lambda \in \D$ via the reproducing kernel
\[ k_\lambda^u(z) = \frac{1-\ol{u(\lambda)}u(z)}{1-\bar{\lambda}z},\quad z\in \D. \]
Furthermore, the model space is equipped with a canonical conjugation (anti-linear isometric involution) $C_u : \clk_u \to \clk_u$, defined by
\begin{equation}\label{eq:conjuModelspace}
    C_u f(z) = u(z)\bar{z}\ol{f(z)}, \quad z\in \T.
\end{equation}
The orthogonal projection from $L^2(\T)$ onto $\clk_u$ is denoted $P_u$.

A closed subspace $\clm \subset H^2$ is \emph{nearly $S^*$-invariant} if for every $f \in \clm$ satisfying $f(0) = 0$, the backward shift satisfies $S^* f \in \clm$. While standard $S^*$-invariant subspaces coincide exactly with model spaces, nearly $S^*$-invariant subspaces admit a broader geometric structure characterized by Hitt \cite{Hitt1988}.

\begin{theorem}[Hitt's Theorem]\cite{Hitt1988}\label{thm:hitt}
Let $\clm\subsetneq H^2$ be a nonzero closed nearly $S^*$-invariant subspace. Then there exists an inner function $u$ satisfying $u(0)=0$, and a unique extremal function $h\in\clm$ such that
\[
\clm=h\clk_u.
\]
The function $h$ satisfies $\|h\|_2=1$, $h(0)>0$, and
\[
h\perp\{f\in\clm:f(0)=0\}.
\]
Furthermore, multiplication by $h$ defines an isometry from $\clk_u$ onto $\clm$, that is,
\[
\|hk\|_2=\|k\|_2,\qquad k\in\clk_u.
\]
\end{theorem}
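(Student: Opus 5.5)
The plan is to follow Hitt's original extremal-function argument. Fix a nonzero proper closed nearly $S^*$-invariant subspace $\clm$ and set $\clm_0=\{f\in\clm: f(0)=0\}$, the kernel of evaluation at $0$ restricted to $\clm$. The first step is to check that $\clm_0\neq\clm$. If every function in $\clm$ vanished at $0$, then near invariance and closedness would give $S^{*n}f\in\clm$ with $(S^{*n}f)(0)=0$ for all $n$, so every Taylor coefficient of $f$ would vanish and $f=0$. Hence $\clm_0$ has codimension one in $\clm$. We take $h$ to be the unit vector in $\clm\ominus\clm_0$, normalized so that $h(0)>0$. This gives uniqueness, $\|h\|_2=1$, and $h\perp\clm_0$ at once, and $h$ solves the extremal problem $\sup\{\operatorname{Re} f(0): f\in\clm,\ \|f\|_2\le1\}$.

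Next we build an expansion. For $f\in\clm$, write $f=\langle f,h\rangle h+f_1$ with $f_1\in\clm_0$. Then $g_1:=S^*f_1\in\clm$ and $f=a_0h+zg_1$. Iterating gives
\[
f=h\,(a_0+a_1z+\dots+a_{n-1}z^{n-1})+z^ng_n,
\]
together with the Pythagorean identity $\|f\|^2=\sum_{j<n}|a_j|^2+\|g_n\|^2$, since $\|zg\|=\|g\|$ and the decomposition is orthogonal at each stage. So $k:=\sum a_jz^j\in H^2$ with $\|k\|\le\|f\|$. The main obstacle is showing that $z^ng_n\to0$ in a weak sense, so that $f=hk$ and $\|hk\|=\|k\|$. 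For this, pair with the reproducing kernel: $(z^ng_n)(\lambda)=\lambda^ng_n(\lambda)$, and $|g_n(\lambda)|\le\|f\|(1-|\lambda|^2)^{-1/2}$, so this tends to $0$ pointwise. Hence $f=hk$ pointwise on $\D$, and $\|g_n\|\to\|f\|^2-\|k\|^2$. To get equality $\|f\|=\|k\|$, use that $\|g_n\|$ is decreasing and that $g_n\rightharpoonup0$ weakly, since its Taylor data shift out. The delicate point, which Hitt handles, is excluding a residual norm. I would argue via the map $W:f\mapsto k$. It is a contraction, and since $\langle z^ng_n, z^m e\rangle$-type terms vanish, it is in fact isometric on the dense set where $\sum |a_j|^2=\|f\|^2$. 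The complete argument uses the identity $\|h p\|=\|p\|$ for polynomials $p$ in the range, which can be verified inductively from the orthogonality $h\perp z\clm$-type relations.

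Finally, set $\mathcal{N}=\{k: hk\in\clm,\ \|hk\|=\|k\|\}=W\clm$. It is closed, since $W$ is isometric onto its range. It is also $S^*$-invariant: if $f=hk$, then $S^*$ applied to the coefficient sequence corresponds exactly to $g_1=W^{-1}S^*k$. Because $\clm\ne H^2$, we have $\mathcal{N}\neq H^2$, so Beurling's theorem, as recalled in the preliminaries, gives $\mathcal{N}=\clk_u$ for some inner $u$. Since $1=Wh\in\clk_u$, we get $u(0)=0$ by taking $\langle 1,u\rangle=\overline{u(0)}=0$. Thus $\clm=h\clk_u$ with multiplication by $h$ isometric, as claimed.
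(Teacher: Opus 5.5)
The paper gives no proof of this theorem; it is quoted from Hitt. Your argument therefore has to stand on its own, and it has two genuine gaps.

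\textbf{The isometry step is not proved.} Everything up to the pointwise identity $f=hk$ with $\|k\|\le\|f\|$ is fine. The step you call delicate, $\lim_n\|g_n\|=0$ (equivalently $\|hk\|=\|k\|$), is the real content of Hitt's theorem, and your treatment of it is circular. The set of $f\in\clm$ with $\sum_j|a_j|^2=\|f\|^2$ is $\ker(I-W^*W)$, which is a closed subspace. So saying that $W$ is isometric on the dense set where $\sum_j|a_j|^2=\|f\|^2$ just restates the claim.

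Your proposed route to density also fails. It is true that $\|hp\|=\|p\|$ whenever $p$ is a polynomial with $hp\in\clm$, because the expansion terminates. But such $hp$ need not be dense in $\clm$. Already for $h\equiv1$ and $\clm=\clk_u$ with $u=zs$, where $s$ is a nonconstant inner function with $s(0)\neq0$, the only polynomials in $\clk_u$ are constants.

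The relation $h\perp z\clm$ that you invoke is false in general. You only have $h\perp\clm\cap zH^2$, and $z\clm\not\subseteq\clm$. For example, with $\clm=\C h$ and $h=(1+z)/\sqrt2$ one gets $\langle h,zh\rangle=1/2$.

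Finally, $g_n\to0$ weakly with decreasing norms does not give $\|g_n\|\to0$. Since $g_n=h\,S^{*n}k$, the gap closes at once when $h\in H^\infty$, because $\|g_n\|\le\|h\|_\infty\|S^{*n}k\|\to0$. The whole difficulty is an unbounded extremal function, and that is where the proofs of Hitt and Sarason do real work. As written, you have only the representation $f=h\,Wf$ with $W$ a contraction. You have neither the isometry nor the closedness of $W\clm$.

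\textbf{The last step is wrong, and the statement itself is false in this case.} The inference that $\clm\neq H^2$ implies $\mathcal N\neq H^2$ fails. Let $\theta$ be a nonconstant inner function with $\theta(0)\neq0$, and put $\clm=\theta H^2$.
\begin{itemize}
\item $\clm$ is closed and proper.
\item $\clm$ is nearly $S^*$-invariant: if $\theta q$ vanishes at $0$, then $q(0)=0$ and $S^*(\theta q)=\theta S^*q$.
\item $\clm\ominus(\clm\cap zH^2)=\C\theta$, so $h=\overline{\theta(0)}\,\theta/|\theta(0)|$ and $\mathcal N=H^2$.
\end{itemize}
In this example, $\clm=h\clk_u$ would force $\clk_u=H^2$, which is impossible for inner $u$. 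So the statement as formulated here is false and cannot be proved. Hitt's theorem only gives $\clm=h\clk$ with $\clk$ a closed $S^*$-invariant subspace containing $1$. Beurling's theorem then yields a dichotomy:
\begin{itemize}
\item $\clk=H^2$, which happens precisely when $\clm=\theta H^2$ for an inner $\theta$ with $\theta(0)\neq0$;
\item $\clk=\clk_u$ with $u(0)=0$.
\end{itemize}
Your final paragraph should end with this dichotomy rather than appealing to $\clm\neq H^2$.
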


The element $h \in \clm$ is geometrically defined as the \emph{extremal function} because it uniquely maximizes the point evaluation at the origin across the unit sphere of the subspace, solving the extremal problem:
\begin{equation}\label{eqn:extremal}
\sup \{ \Re(g(0)) : g \in \clm, \, \|g\|_2 = 1 \}.
\end{equation}

\begin{remark}
Since $h\in\clm=h\clk_u$, there exists $k\in\clk_u$ such that $h=hk$. As the boundary zero set of the nonzero function $h\in H^2$ has measure zero, it follows that $k=1$ a.e. Hence $1\in\clk_u$, which is equivalent to $u(0)=0$. Consequently, the reproducing kernel for $\clk_u$ at the origin simplifies to $k_0^u(z) = 1$. 
    
Furthermore, as demonstrated by Sarason \cite{Sarason1987}, the condition $u(0) = 0$ dictates the structural form of the extremal multiplier. Every isometric multiplier on $\clk_u$ has the form
\[
h=\frac{a}{1-ub},
\]
where $a,b\in H^\infty$ lie in the unit ball and satisfy
$|a|^2+|b|^2=1$ a.e. on $\T$. As a consequence, $h\clk_u$ is a (closed) nearly invariant subspace of $H^2$ with extremal function $h$ as in \eqref{eqn:extremal}. Hayashi \cite{Hayashi1990} characterized those nearly
$S^*$-invariant subspaces that arise as kernels of Toeplitz operators. We do not require the precise form of this characterization here.
\end{remark}

\subsection{Classical operators on the model space}

For a symbol $\vp \in L^\infty(\T)$, the \emph{truncated Toeplitz operator} (TTO) $A_\vp : \clk_u \to \clk_u$ is defined by 
\[ A_\vp f = P_u (\vp f), \quad f \in \clk_u. \]
Systematically studied by Sarason \cite{Sarason2007}, the adjoint is given by $A_\vp^* = A_{\bar{\vp}}$. When $\vp(z) = z$, $A_z$ is the compressed shift on the model space, denoted $S_u = \restr{P_u M_z}{\clk_u}$.

The \emph{big truncated Hankel operator} $B_\vp : \clk_u \to \clk_u^\perp$ is defined by
\[ B_\vp f = (I-P_u) (\vp f), \quad f \in \clk_u. \]
Its adjoint $B_\vp^* : \clk_u^\perp \to \clk_u$ is defined by
\[ B_\vp^* g = P_u (\bar{\vp} g), \quad g \in \clk_u^\perp. \]

With respect to the orthogonal direct sum $L^2(\T) = \clk_u \oplus \clk_u^\perp$, the multiplication operator $M_\vp$ admits the $2 \times 2$ block matrix representation:
\[ M_\vp = \begin{bmatrix} A_\vp & B_{\bar{\vp}}^* \\ B_\vp & D_\vp \end{bmatrix}, \]
where \[D_\vp f = (I-P_{u})(\vp f),\quad f\in \clk_u^\perp\] is the \emph{dual truncated Toeplitz operator} (DTTO) acting on $\clk_u^\perp$. Since $\clk_u^\perp=uH^2\oplus H_{-}^2$, the operator $D_\vp$ has the block representation
\[
D_\vp
=
\begin{bmatrix}
M_uT_\vp M_{\bar u} & M_uH_{u\bar\vp}^{*}\\
H_{\vp u}M_{\bar u} & S_\vp
\end{bmatrix}
\]
with respect to $uH^2\oplus H_{-}^2$. Equivalently, if
\[
\mathcal{U}:H^2\oplus H_{-}^2\to\clk_u^\perp,
\qquad \mathcal{U}(f\oplus g)=uf\oplus g,
\]
then
\[
\mathcal{U}^*D_\vp\mathcal{U}
=\begin{bmatrix}
T_\vp & H_{u\bar\vp}^{*}\\
H_{\vp u} & S_\vp
\end{bmatrix}
\]
with respect to $H^2\oplus H_{-}^2$. This block-matrix structure is modified in the nearly invariant setting by the presence of the extremal multiplier.

\subsection{The space $\clm^\perp$}

Since the nearly $S^*$-invariant subspace $\clm = h\clk_u$ is contained within the Hardy space $H^2$, its orthogonal complement in $L^2(\T)$ splits as
\[ \clm^\perp = \cln \oplus H_{-}^2, \]
where $H_{-}^2 = \ol{zH^2}$ is the co-analytic space, and $\cln = H^2 \ominus \clm$ is the analytic defect space. Every element $f \in \clm^\perp$ decomposes uniquely as $f = f_{+} + f_{-}$, where $f_{-} \in H^2_{-}$ and $f_{+} \in \cln$.

The projection formula below is due to Hartmann and Ross
\cite[Lemma~2.4]{Hartmann:Ross}. Since the extremal multiplier $h$ need not be bounded, the expression $P_u(\bar h f)$ has to be interpreted carefully, through the reproducing-kernel pairing, rather than as the action of a bounded operator $P_uM_{\bar h}$ on all of $L^2(\T)$. We recall the formula in a form adapted to our setting. Let
\[
V:\clk_u\to \clm,\qquad Vk=hk,
\]
be the canonical unitary induced by Hitt's theorem. Then the projection onto $\clm$ is simply $P_\clm=VV^*$. This gives the pointwise formula
\[
P_\clm f=hP_u(\bar h f),
\]
where the right-hand side is understood via the reproducing-kernel identity defining $V^*f$.

\begin{lemma}\label{lem:proj}
Let $\clm=h\clk_u$ be a nearly $S^*$-invariant subspace of $H^2$, where $h$ is the extremal function and multiplication by $h$ is an isometry from $\clk_u$ onto $\clm$. Let
\[
V:\clk_u\to \clm,\qquad Vk=hk ,
\]
be this unitary map. Then the orthogonal projection of $L^2(\T)$ onto $\clm$ is
\[
P_\clm=VV^*.
\]
Equivalently, for every $f\in L^2(\T)$,
\[
P_\clm f=h\,P_u(\bar h f),
\]
where $P_u(\bar h f)$ is understood through the reproducing-kernel pairing
\[
\big(P_u(\bar h f)\big)(\lambda)
=
\langle \bar h f,k_\lambda^u\rangle
=
\langle f,hk_\lambda^u\rangle_{L^2},
\qquad \lambda\in\D.
\]
Here
\[
\langle \bar h f,k_\lambda^u\rangle
:=
\int_\T \bar h f\,\overline{k_\lambda^u}\,dm
\]
is the natural $L^1$--$L^\infty$ pairing. Consequently,
\[
P_{\clm^\perp}f=f-h\,P_u(\bar h f), \qquad f\in L^2(\T).
\]
In formal notation, one may write
\[
P_\clm=M_hP_uM_{\bar h}, \qquad  P_{\clm^\perp}=I-M_hP_uM_{\bar h},
\]
provided this is interpreted in the above sense, and not as a composition of bounded multiplication operators on all of $L^2(\T)$ unless $h\in H^\infty$.
\end{lemma}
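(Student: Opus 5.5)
The plan is to obtain $P_\clm=VV^*$ from general Hilbert space facts, and then read off the pointwise formula by computing $V^*$ on reproducing kernels.

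\textbf{Step 1: $VV^*$ is the projection onto $\clm$.} By Theorem~\ref{thm:hitt}, $V$ is a linear isometry from $\clk_u$ onto $\clm$. Regard $V$ as an isometry into $L^2(\T)$ with closed range $\clm$. Then $V^*V=I_{\clk_u}$, so $(VV^*)^2=V(V^*V)V^*=VV^*$, and $VV^*$ is self-adjoint. Hence $VV^*$ is an orthogonal projection. Its range contains $V(V^*V)\clk_u=V\clk_u=\clm$ and is contained in $\ran V=\clm$. This gives $P_\clm=VV^*$, and therefore $P_{\clm^\perp}=I-VV^*$.

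\textbf{Step 2: compute $V^*f$ for arbitrary $f\in L^2(\T)$.} Since $V^*f\in\clk_u$ and $\clk_u$ is a reproducing kernel Hilbert space, for each $\lambda\in\D$ we have
\[
(V^*f)(\lambda)=\langle V^*f,k^u_\lambda\rangle=\langle f,Vk^u_\lambda\rangle_{L^2}=\langle f,hk^u_\lambda\rangle_{L^2}.
\]
This is well defined because $k^u_\lambda\in H^\infty$, so $hk^u_\lambda\in H^2\subset L^2$. We then rewrite this as an integral:
\[
\int_\T f\,\ol{h}\,\ol{k^u_\lambda}\,dm=\langle \bar h f,k^u_\lambda\rangle.
\]
The right side is the $L^1$--$L^\infty$ pairing, since $\bar h f\in L^1$ by Cauchy--Schwarz and $k^u_\lambda$ is bounded on $\T$. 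When $\bar h f\in L^2$, this pairing is exactly $(P_u(\bar hf))(\lambda)$. So it is natural to define $P_u(\bar h f):=V^*f$ in general. Applying $V$ gives $P_\clm f=hP_u(\bar h f)$, and subtracting from $f$ gives the formula for $P_{\clm^\perp}$.

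\textbf{Step 3: the bounded case.} If $h\in H^\infty$, then $M_{\bar h}$ is bounded on $L^2$ and $M_{\bar h}^*=M_h$. In that case $V^*=P_uM_{\bar h}$ holds literally: for $k\in\clk_u$,
\[
\langle f,hk\rangle=\langle \bar hf,k\rangle=\langle P_u\bar hf,k\rangle.
\]
This justifies the formal notation $P_\clm=M_hP_uM_{\bar h}$ in that case.

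\textbf{Main obstacle.} The only delicate point is consistency when $h$ is unbounded. We must check that the function $\lambda\mapsto\langle \bar hf,k^u_\lambda\rangle$ is an element of $\clk_u$, and not merely some analytic function on $\D$. This is automatic from the identity with $V^*f$: it coincides pointwise on $\D$ with the genuine element $V^*f\in\clk_u$. So the reproducing-kernel interpretation determines a unique element of $\clk_u$, and no independent $L^1$ projection theory is needed.
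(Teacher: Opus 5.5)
Your proposal is correct and follows essentially the same route as the paper: realize $P_\clm=VV^*$ for the isometry $V$ with range $\clm$, then evaluate $V^*f$ against the kernels $k_\lambda^u$ to get $(V^*f)(\lambda)=\langle f,hk_\lambda^u\rangle=\langle \bar h f,k_\lambda^u\rangle$, and take this element of $\clk_u$ as the meaning of $P_u(\bar h f)$. You also check explicitly that $VV^*$ is the orthogonal projection onto $\clm$ and verify $V^*=P_uM_{\bar h}$ literally when $h\in H^\infty$; these are small details the paper leaves implicit.
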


\begin{proof}
Define
\[
V:\clk_u\to L^2(\T),\qquad Vk=hk .
\]
By Hitt's theorem, multiplication by $h$ is an isometry from $\clk_u$ onto $\clm=h\clk_u$. Hence $V$ is a unitary operator from $\clk_u$ onto $\clm$, viewed as a closed subspace of $L^2(\T)$. Therefore, by the standard Hilbert space formula for the orthogonal projection onto the range of an isometry,
\[
P_\clm=VV^*.
\]

We now identify $V^*$. Fix $f\in L^2(\T)$. For every $k\in\clk_u$,
\[
\langle k,V^*f\rangle_{\clk_u}
=
\langle Vk,f\rangle_{L^2}
=
\langle hk,f\rangle_{L^2}.
\]
Thus
\[
\left|\langle hk,f\rangle\right| \leq \|hk\|_2\|f\|_2 = \|k\|_2\|f\|_2.
\]
Hence the map
\[
k\mapsto \langle hk,f\rangle
\]
is a bounded linear functional on $\clk_u$. Therefore there exists a unique element of $\clk_u$, namely $V^*f$, such that
\[
\langle k,V^*f\rangle
=
\langle hk,f\rangle
\qquad (k\in\clk_u).
\]

For $\lambda\in\D$, take $k=k_\lambda^u$. Then
\[
(V^*f)(\lambda)
= \langle V^*f,k_\lambda^u\rangle = \langle f,hk_\lambda^u\rangle.
\]
Since $h\in H^2$ and $f\in L^2(\T)$, we have $\bar h f\in L^1(\T)$. Hence the last expression may be written as
\[
\left(P_u(\bar h f)\right)(\lambda)
:= \int_\T \bar h f\,\overline{k_\lambda^u}\,dm = \langle f,hk_\lambda^u\rangle_{L^2}.
\]
Thus, in the reproducing-kernel sense,
\[
V^*f=P_u(\bar h f).
\]
Consequently,
\[
P_\clm f = VV^*f = h\,P_u(\bar h f),
\qquad f\in L^2(\T).
\]
Finally,
\[
P_{\clm^\perp}=I-P_\clm,
\]
and therefore,
\[
P_{\clm^\perp}f
=
f-h\,P_u(\bar h f),
\qquad f\in L^2(\T).
\]
This completes the proof.
\end{proof}

Throughout the paper, we regard
\[
V:\clk_u\to L^2(\T),\qquad Vk=hk,
\]
as an isometry with
\[
\operatorname{ran}V=\clm.
\]
Equivalently, $V$ is a unitary from $\clk_u$ onto $\clm$. Its adjoint is
\[
V^*:L^2(\T)\to\clk_u,
\]
and
\[
V^*V=I_{\clk_u}, \qquad VV^*=P_\clm.
\]

By extending co-analytic functions in $H_{-}^2$ via their conjugate-analytic extensions into $\D$, the orthogonal complement $\clm^\perp = \cln \oplus H_{-}^2$ forms a Reproducing Kernel Hilbert Space. The projection onto the analytic defect space $\cln$ is $P_\cln = P_{+} - P_\clm$.

\begin{lemma}\label{lem:RKHS}
Let $\clm=h\clk_u$ be a nearly $S^*$-invariant subspace and put $\cln=H^2\ominus\clm$. Then $\clm^\perp=\cln\oplus H_{-}^2$.
If
\[
f_{-}(\zeta)
= \sum_{n\geq1}a_{-n}\bar\zeta^{\,n},\qquad \zeta\in\T,
\]
define its conjugate-analytic extension to $\D$ by
\[
f_{-}^\#(\lambda)
:=\sum_{n\geq1}a_{-n}\bar\lambda^{\,n},\qquad \lambda\in\D.
\]
For $f=f_{+}+f_{-}\in\clm^\perp$, where
$f_{+}\in\cln$ and $f_{-}\in H_{-}^2$, define
\[
f(\lambda)
:=f_{+}(\lambda)+f_{-}^\#(\lambda),\qquad \lambda\in\D.
\]
Under this identification, $\clm^\perp$ is a scalar-valued harmonic reproducing kernel Hilbert space on $\D$.

Its reproducing kernel is
\[
K_\lambda^{\clm^\perp}
=
R_\lambda^\cln+k_\lambda^-,
\]
where
\[
R_\lambda^\cln(z)
=
\frac{1}{1-\bar\lambda z}
-
\overline{h(\lambda)}\,h(z)
\frac{1-\overline{u(\lambda)}u(z)}{1-\bar\lambda z},
\]
and
\[
k_\lambda^-(z)
= \sum_{n=1}^{\infty}\lambda^n\bar z^{\,n}
= \frac{\lambda\bar z}{1-\lambda\bar z}.
\]
Thus
\[
K_\lambda^{\clm^\perp}(z)
= \left(\frac{1}{1-\bar\lambda z}-\overline{h(\lambda)}\,h(z)
\frac{1-\overline{u(\lambda)}u(z)}{1-\bar\lambda z}\right)
+\frac{\lambda\bar z}{1-\lambda\bar z}.
\]
\end{lemma}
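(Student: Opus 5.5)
The strategy is to build the reproducing kernel of $\clm^\perp$ from the kernels of the two orthogonal summands $\cln$ and $H^2_-$, and then add them. First, the decomposition $\clm^\perp=\cln\oplus H^2_-$ follows directly from $\clm\subset H^2$. Indeed, $L^2=H^2\oplus H^2_-$ and $H^2=\clm\oplus\cln$, so $L^2=\clm\oplus\cln\oplus H^2_-$. Next, we check that point evaluation $f\mapsto f(\lambda)=f_+(\lambda)+f_-^\#(\lambda)$ is well defined and bounded on $\clm^\perp$. Uniqueness of the decomposition handles well-definedness. For boundedness, Cauchy--Schwarz gives $|f_+(\lambda)|\le \|f_+\|_2(1-|\lambda|^2)^{-1/2}$ and $|f_-^\#(\lambda)|\le \|f_-\|_2(\sum_{n\ge1}|\lambda|^{2n})^{1/2}$. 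These bounds also show the functions are harmonic, being the sum of an analytic and a conjugate-analytic function. One point needs a remark: the identification must be injective. If $f_+(\lambda)+f_-^\#(\lambda)=0$ on $\D$, then the analytic and antianalytic parts must both be constant. Since $f_-^\#(0)=0$, both vanish identically, and hence $f=0$.

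By Riesz representation, the kernel at $\lambda$ is the vector $K_\lambda$ with $f(\lambda)=\langle f,K_\lambda\rangle$. Since the sum is orthogonal and evaluation splits as an $\cln$-part plus an $H^2_-$-part, $K_\lambda=R^\cln_\lambda+k^-_\lambda$. Here $R^\cln_\lambda$ is the kernel of $\cln$ with its $H^2$ evaluation, and $k^-_\lambda$ is the element of $H^2_-$ that represents $f_-\mapsto f_-^\#(\lambda)$. For the co-analytic part we expand: $\langle f_-,\sum_n \lambda^n\bar z^n\rangle=\sum_n a_{-n}\bar\lambda^n=f_-^\#(\lambda)$. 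This identifies $k^-_\lambda=\sum_{n\ge1}\lambda^n\bar z^n=\lambda\bar z/(1-\lambda\bar z)$, and the series lies in $H^2_-$ because $|\lambda|<1$.

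For the analytic part we use $P_\cln=P_+-P_\clm$. The Szeg\H{o} kernel $s_\lambda=(1-\bar\lambda z)^{-1}$ reproduces $H^2$, so $R^\cln_\lambda=P_\cln s_\lambda=s_\lambda-P_\clm s_\lambda$. By Lemma~\ref{lem:proj}, $P_\clm s_\lambda=VV^*s_\lambda$. We compute $V^*s_\lambda$ through its values: for $\mu\in\D$,
\[
(V^*s_\lambda)(\mu)=\langle s_\lambda,hk^u_\mu\rangle=\overline{\langle hk^u_\mu,s_\lambda\rangle}=\overline{h(\lambda)k^u_\mu(\lambda)}=\overline{h(\lambda)}\,k^u_\lambda(\mu),
\]
using the Hermitian symmetry of the model-space kernel. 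Thus $V^*s_\lambda=\overline{h(\lambda)}k^u_\lambda$ and $P_\clm s_\lambda=\overline{h(\lambda)}hk^u_\lambda$, which is the subtracted term in $R^\cln_\lambda$. The same result can be seen more directly: $hk^u_\lambda$ is the kernel of $\clm$, since $\langle hg,hk^u_\lambda\rangle=\langle g,k^u_\lambda\rangle=g(\lambda)$, so its value at $h g$ is $(hg)(\lambda)/h(\lambda)$ times $h(\lambda)$, after adjusting the conjugate.

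Adding the two pieces gives the stated formula for $K^{\clm^\perp}_\lambda$. The only delicate point is the identity $(V^*s_\lambda)(\mu)=\langle s_\lambda,hk^u_\mu\rangle$, because $h$ may be unbounded. This is not a real obstacle: $hk^u_\mu\in\clm\subset H^2$, so the pairing is a genuine $L^2$ inner product, and evaluating it against $s_\lambda$ is ordinary $H^2$ point evaluation.
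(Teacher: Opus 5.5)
Your proposal is correct and follows essentially the paper's proof. It uses the same orthogonal splitting $\clm^\perp=\cln\oplus H^2_-$, the same Cauchy--Schwarz bounds, and the same series computation for $k^-_\lambda$. The analytic part is also the same, $R^\cln_\lambda=s_\lambda-VV^*s_\lambda$ with $V^*s_\lambda=\overline{h(\lambda)}k^u_\lambda$: you test against $k^u_\mu$ where the paper tests against a general $g\in\clk_u$, which is the same computation.

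One small correction concerns your optional aside. $hk^u_\lambda$ represents $hg\mapsto g(\lambda)$, not point evaluation on $\clm$. The reproducing kernel of $\clm$ at $\lambda$ is $\overline{h(\lambda)}\,hk^u_\lambda=P_\clm s_\lambda$, which is exactly what your main computation already gives.
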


\begin{proof}
Since $\clm\subset H^2$, we have
\[
\clm^\perp
=(H^2\ominus\clm)\oplus H_{-}^2
=\cln\oplus H_{-}^2.
\]

For
\[
f_{-}(\zeta)=\sum_{n\geq1}a_{-n}\bar\zeta^{\,n}\in H_{-}^2,
\]
the series
\[
f_{-}^\#(\lambda)
=
\sum_{n\geq1}a_{-n}\bar\lambda^{\,n}
\]
converges absolutely for every $\lambda\in\D$. Indeed,
\[
|f_{-}^\#(\lambda)|
\leq
\left(\sum_{n\geq1}|a_{-n}|^2\right)^{1/2}
\left(\sum_{n\geq1}|\lambda|^{2n}\right)^{1/2}
=
\|f_{-}\|_2
\frac{|\lambda|}{\sqrt{1-|\lambda|^2}}.
\]
Hence the evaluation functionals are bounded. Moreover, the resulting
harmonic realization is injective by uniqueness of the analytic and
conjugate-analytic expansions.

The kernel for the analytic defect space $\cln$ is obtained by projecting
the Szeg\H{o} kernel $k_\lambda$ onto $\cln$. Since
\[
P_\cln=P_+-P_\clm,
\]
we have
\[
R_\lambda^\cln
=
P_\cln k_\lambda
=
k_\lambda-P_\clm k_\lambda.
\]
Using the projection formula
\[
P_\clm f=hP_u(\bar h f),
\]
where the expression is understood in the reproducing-kernel sense, we
obtain
\[
P_\clm k_\lambda
=
hP_u(\bar h k_\lambda).
\]

For every $g\in\clk_u$,
\[
\langle g,\bar h k_\lambda\rangle
=
\langle hg,k_\lambda\rangle
=
h(\lambda)g(\lambda).
\]
On the other hand,
\[
\left\langle
g,\overline{h(\lambda)}k_\lambda^u
\right\rangle
=
h(\lambda)\langle g,k_\lambda^u\rangle
=
h(\lambda)g(\lambda).
\]
Consequently,
\[
P_u(\bar h k_\lambda)
=
\overline{h(\lambda)}k_\lambda^u.
\]
Therefore
\[
R_\lambda^\cln(z)
=
k_\lambda(z)
-
\overline{h(\lambda)}h(z)k_\lambda^u(z),
\]
and hence
\[
R_\lambda^\cln(z)
=
\frac{1}{1-\bar\lambda z}
-
\overline{h(\lambda)}h(z)
\frac{1-\overline{u(\lambda)}u(z)}
     {1-\bar\lambda z}.
\]

Next, consider the co-analytic part. Since the inner product is linear
in the first variable, define
\[
k_\lambda^-(z)
=
\sum_{n=1}^{\infty}\lambda^n\bar z^{\,n}
=
\frac{\lambda\bar z}{1-\lambda\bar z}.
\]
Then
\[
\begin{aligned}
\langle f_{-},k_\lambda^-\rangle
&=
\left\langle
\sum_{n\geq1}a_{-n}\bar z^{\,n},
\sum_{n\geq1}\lambda^n\bar z^{\,n}
\right\rangle  \\
&=
\sum_{n\geq1}a_{-n}\bar\lambda^{\,n}
=
f_{-}^\#(\lambda).
\end{aligned}
\]

Finally, if
\[
f=f_{+}+f_{-}\in\clm^\perp,
\qquad
f_{+}\in\cln,\quad f_{-}\in H_{-}^2,
\]
then
\[
\begin{aligned}
\left\langle f,K_\lambda^{\clm^\perp}\right\rangle
&=
\langle f_{+},R_\lambda^\cln\rangle
+
\langle f_{-},k_\lambda^-\rangle \\
&=
f_{+}(\lambda)+f_{-}^\#(\lambda) \\
&=
f(\lambda).
\end{aligned}
\]
Hence
\[
K_\lambda^{\clm^\perp}
=
R_\lambda^\cln+k_\lambda^-
\]
is the reproducing kernel.
\end{proof}

\section{Weighted dual truncated Toeplitz operators}

Replacing the model-space complement $\clk_u^\perp$ by the orthogonal complement $\clm^\perp$ of a nearly $S^*$-invariant subspace introduces the extremal multiplier $h$ into the compression structure.

\begin{definition}\label{def:weighted_DTTO}
Let $\clm=h\clk_u$ be a nearly $S^*$-invariant subspace of $H^2$. For $\vp\in L^\infty(\T)$, the \emph{weighted dual truncated Toeplitz operator} with symbol $\vp$ is the compression of $M_\vp$ to $\clm^\perp$:
\[
D_\vp^{\clm}=\restr{P_{\clm^\perp}M_\vp}{\clm^\perp}.
\]
Equivalently,
\[
D_\vp^{\clm}f
=
P_{\clm^\perp}(\vp f),
\qquad f\in\clm^\perp .
\]
\end{definition}

\begin{remark}
Let
\[
V:\clk_u\to\clm,\qquad Vk=hk,
\]
be the canonical unitary induced by Hitt's theorem. Then
\[
P_\clm=VV^*,
\qquad
P_{\clm^\perp}=I-VV^*.
\]
Hence, for $f\in\clm^\perp$,
\[
D_\vp^{\clm}f
=
\vp f-VV^*(\vp f).
\]
Using the projection formula of Hartmann--Ross, this may be written as
\[
D_\vp^{\clm}f
=
\vp f-hP_u(\bar h\vp f).
\]
Here $\bar h\vp f\in L^1(\T)$, and $P_u(\bar h\vp f)$ is understood through the reproducing-kernel pairing
\[
\big(P_u(\bar h\vp f)\big)(\lambda)
=
\langle \bar h\vp f,k_\lambda^u\rangle
=
\langle \vp f,hk_\lambda^u\rangle,
\qquad \lambda\in\D,
\]
where the pairing is understood in the sense of
Lemma~\ref{lem:proj}. Thus the notation $hP_u(\bar h\vp f)$ represents the concrete realization of the operator $VV^*(\vp f)$.
\end{remark}

\begin{remark}
If, in addition, $h\in H^\infty$, then
$\bar h\vp f\in L^2(\T)$ for every $f\in\clm^\perp$. Using
\[
P_u=P_+-M_uP_+M_{\bar u},
\]
we obtain
\[
D_\vp^\clm f
= \vp f - hP_+(\bar h\vp f) + huP_+(\bar u\,\bar h\vp f).
\]
Since $h\in H^\infty$, all three terms belong to $L^2(\T)$.

For a general extremal multiplier $h\in H^2$, this decomposition is not
used, because the two terms involving $P_+$ need not separately belong to $L^2(\T)$. The universally valid formula is
\[
D_\vp^\clm f=\vp f-VV^*(\vp f).
\]
\end{remark}

Finally, for every $\vp\in L^\infty(\T)$,
\[
(D_\vp^{\clm})^*=D_{\bar\vp}^{\clm}.
\]
Indeed, for $f,g\in\clm^\perp$,
\[
\langle D_\vp^{\clm}f,g\rangle
=
\langle P_{\clm^\perp}(\vp f),g\rangle
=
\langle \vp f,g\rangle
=
\langle f,\bar\vp g\rangle
=
\langle f,P_{\clm^\perp}(\bar\vp g)\rangle
=
\langle f,D_{\bar\vp}^{\clm}g\rangle .
\]

\subsection{Boundedness and compactness}

Since $\clm\subset H^2$, we have
\[
\clm^\perp=(H^2\ominus \clm)\oplus H^2_{-}.
\]
In particular, $H_{-}^2\subset \clm^\perp$. This allows us to recover the classical dual Toeplitz compression as a corner of $D_\vp^\clm$.

\begin{theorem}\label{thm:boundedness}
For $\vp\in L^\infty(\T)$, the weighted dual truncated Toeplitz operator $D_\vp^\clm$ is bounded on $\clm^\perp$, and
\[
\|D_\vp^\clm\|=\|\vp\|_\infty .
\]
\end{theorem}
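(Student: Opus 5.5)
The plan is to prove the upper bound $\|D_\vp^\clm\|\le\|\vp\|_\infty$ from the compression structure, and the lower bound by restricting to the corner $H^2_-\subset\clm^\perp$, where the compression contains the dual Toeplitz operator $S_\vp$.

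\textbf{Upper bound.} By Lemma~\ref{lem:proj}, $P_{\clm^\perp}=I-VV^*$ is an orthogonal projection on $L^2(\T)$, so $\|P_{\clm^\perp}\|\le 1$. Hence, for $f\in\clm^\perp$,
\[
\|D_\vp^\clm f\|_2=\|P_{\clm^\perp}(\vp f)\|_2\le\|\vp f\|_2\le\|\vp\|_\infty\|f\|_2 .
\]
This gives boundedness and $\|D_\vp^\clm\|\le\|\vp\|_\infty$. Note that no boundedness of $h$ is needed here.

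\textbf{Lower bound.} Since $H^2_-\subset\clm^\perp$, the orthogonal projection $P_-$ onto $H^2_-$ satisfies $P_-P_{\clm^\perp}=P_-$ (because $H^2_-\subset\clm^\perp$). Therefore, for $g\in H^2_-$,
\[
P_-D_\vp^\clm g=P_-(\vp g)=S_\vp g .
\]
This gives $\|D_\vp^\clm\|\ge\|S_\vp\|$.

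By the unitary equivalence $US_\vp U^*=T_{\tilde\vp}$ recalled in the preliminaries, $\|S_\vp\|=\|T_{\tilde\vp}\|$. The classical Brown--Halmos spectral-inclusion (norm) theorem gives $\|T_{\tilde\vp}\|=\|\tilde\vp\|_\infty$. Finally, $\|\tilde\vp\|_\infty=\|\vp\|_\infty$, since $z\mapsto\bar z$ preserves Lebesgue measure on $\T$.

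\textbf{Main point to justify.} The only nontrivial input is the classical identity $\|T_\psi\|=\|\psi\|_\infty$. If a self-contained argument is preferred, I would prove $\|S_\vp\|\ge\|\vp\|_\infty$ directly. Given $\varepsilon>0$, choose $g\in L^2$ with $\|g\|_2=1$ and $\|\vp g\|_2\ge\|\vp\|_\infty-\varepsilon$; such $g$ exists because $\|M_\vp\|=\|\vp\|_\infty$. Approximate $g$ by a trigonometric polynomial, and replace it by $\bar z^N g\in H^2_-$ for $N$ large. Then:
\begin{itemize}
\item $\|\bar z^N g\|_2=\|g\|_2$;
\item $\|P_+(\vp\bar z^N g)\|_2\to0$ as $N\to\infty$, since $P_+M_{\bar z^N}\to0$ strongly.
\end{itemize}
Consequently $\|S_\vp(\bar z^Ng)\|_2\to\|\vp g\|_2$. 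Letting $\varepsilon\to0$ yields $\|S_\vp\|\ge\|\vp\|_\infty$. Combined with the upper bound, this gives $\|D_\vp^\clm\|=\|\vp\|_\infty$.
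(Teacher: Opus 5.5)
Your proof is correct and is essentially the paper's argument: the upper bound comes from contractivity of $P_{\clm^\perp}$, and the lower bound comes from compressing to the corner $H^2_-\subset\clm^\perp$, where $P_-D_\vp^\clm P_-=S_\vp$ is unitarily equivalent to $T_{\widetilde\vp}$, which has norm $\|\vp\|_\infty$. Your optional direct argument using $\bar z^N g$ is also valid; it proves $\|S_\vp\|\ge\|\vp\|_\infty$ from scratch, whereas the paper quotes the classical Toeplitz norm identity.
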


\begin{proof}
Since $P_{\clm^\perp}$ is an orthogonal projection, for every
$f\in\clm^\perp$ we have
\[
\|D_\vp^\clm f\|
=
\|P_{\clm^\perp}(\vp f)\|
\leq
\|\vp f\|
\leq
\|\vp\|_\infty\|f\|.
\]
Hence
\[
\|D_\vp^\clm\|\leq \|\vp\|_\infty.
\]

For the reverse inequality, note that $H_{-}^2\subset\clm^\perp$. Hence
\[
P_{\clm^\perp}P_{-}=P_{-},
\qquad
P_{-}P_{\clm^\perp}=P_{-}.
\]
Therefore, as an operator on $H_{-}^2$,
\[
P_{-}D_\vp^\clm P_{-}
=
P_{-}P_{\clm^\perp}M_\vp P_{-}
=
P_{-}M_\vp P_{-}.
\]
The operator $P_{-}M_\vp P_{-}$ on $H_{-}^2$ is the classical dual Toeplitz operator $S_\vp$. It is unitarily equivalent to a Hardy-space Toeplitz operator $T_{\widetilde{\vp}}$, where
\[
\widetilde{\vp}(z)=\vp(\bar{z}).
\]
Consequently,
\[
\|P_{-}D_\vp^\clm P_{-}\|
=
\|S_\vp\|
=
\|T_{\widetilde{\vp}}\|
=
\|\widetilde{\vp}\|_\infty
=
\|\vp\|_\infty.
\]
Since
\[
\|P_{-}D_\vp^\clm P_{-}\|\leq \|D_\vp^\clm\|,
\]
we obtain
\[
\|\vp\|_\infty\leq \|D_\vp^\clm\|.
\]
Combining the two inequalities gives
\[
\|D_\vp^\clm\|=\|\vp\|_\infty.
\]
\end{proof}

\begin{theorem}\label{thm:compactness}
For $\vp\in L^\infty(\T)$, the operator $D_\vp^\clm$ is compact on
$\clm^\perp$ if and only if $\vp=0$ almost everywhere on $\T$.
\end{theorem}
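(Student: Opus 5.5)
The plan is to reuse the corner argument from Theorem~\ref{thm:boundedness}. One direction is trivial: if $\vp=0$ a.e., then $D_\vp^\clm=0$, which is compact. For the converse, assume $D_\vp^\clm$ is compact on $\clm^\perp$. Since $H_{-}^2\subset\clm^\perp$, we have $P_{-}P_{\clm^\perp}=P_{-}$, so the compression of $D_\vp^\clm$ to $H_{-}^2$ is
\[
P_{-}D_\vp^\clm\big|_{H_{-}^2}=P_{-}M_\vp\big|_{H_{-}^2}=S_\vp .
\]
It is obtained from $D_\vp^\clm$ by composing with the bounded inclusion $H_{-}^2\hookrightarrow\clm^\perp$ and the bounded projection $P_{-}$. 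Compact operators form an ideal, so $S_\vp$ is compact on $H_{-}^2$.

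Next I transfer this to the Hardy space with the canonical unitary $U:H_{-}^2\to H^2$ from the preliminaries. The identity $US_\vp U^*=T_{\widetilde\vp}$ shows that $T_{\widetilde\vp}$ is compact on $H^2$, where $\widetilde\vp(z)=\vp(\bar z)$.

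The remaining step is the classical fact that the only compact Toeplitz operator is zero. I would either cite it (Brown--Halmos) or prove it directly. Let $e_n=z^n$, so $e_n\to0$ weakly in $H^2$. For fixed $j,k\ge0$,
\[
\langle T_{\widetilde\vp}e_{n+j},e_{n+k}\rangle=\widehat{\widetilde\vp}(k-j).
\]
Compactness gives $\|T_{\widetilde\vp}e_{n+j}\|\to0$, so the left side tends to $0$ as $n\to\infty$. Since the left side does not depend on $n$, every Fourier coefficient of $\widetilde\vp$ vanishes. Hence $\widetilde\vp=0$ a.e., and therefore $\vp=0$ a.e. on $\T$.

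The only point needing any care is that compressing a compact operator to the subspace $H_{-}^2$ keeps it compact. This is immediate from the ideal property, because the compression equals $P_{-}D_\vp^\clm\iota$ with $\iota$ the inclusion. The weight $h$ plays no role at all, because $H_{-}^2$ sits inside $\clm^\perp$ regardless of $h$.
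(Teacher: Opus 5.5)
Your proposal is correct and follows the paper's proof: you compress to $H_{-}^2\subset\clm^\perp$ to get $S_\vp$ compact, transfer to $T_{\widetilde\vp}$ via $U$, and use the fact that the only compact Toeplitz operator is zero. The one addition is your short self-contained proof of that last fact via $\langle T_{\widetilde\vp}e_{n+j},e_{n+k}\rangle=\widehat{\widetilde\vp}(k-j)$, which the paper simply cites; that argument is valid.
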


\begin{proof}
If $\vp=0$ almost everywhere, then $D_\vp^\clm=0$, and hence
$D_\vp^\clm$ is compact.

Conversely, suppose that $D_\vp^\clm$ is compact on $\clm^\perp$. Since $H_{-}^2\subset\clm^\perp$, the compression of $D_\vp^\clm$ to $H_{-}^2$ is compact. But, as above,
\[
P_{-}D_\vp^\clm P_{-}
=
P_{-}M_\vp P_{-}.
\]
Thus the classical dual Toeplitz operator
\[
S_\vp=\restr{P_{-}M_\vp}{H_{-}^2}
\]
is compact on $H_{-}^2$.

The dual Toeplitz operator $S_\vp$ is unitarily equivalent to the Hardy
Toeplitz operator $T_{\widetilde{\vp}}$, where
\[
\widetilde{\vp}(z)=\vp(\bar{z}).
\]
Therefore $T_{\widetilde{\vp}}$ is compact on $H^2$. Since a Toeplitz operator on $H^2$ with bounded symbol is compact if and only if its symbol is zero almost everywhere, we get
\[
\widetilde{\vp}=0 \quad \text{a.e. on } \T.
\]
Hence
\[
\vp=0 \quad \text{a.e. on } \T.
\]
This completes the proof.
\end{proof}

\subsection{Unitary equivalence and rigidity}
\begin{theorem}\label{thm:rigidity}
Let $\clm=h\clk_u$ be a nonzero proper nearly $S^*$-invariant subspace
of $H^2$, where $h$ is the extremal function. For
$\vp\in L^\infty(\T)$, let
\[
D_\vp
=
\restr{(I-P_u)M_\vp}{\clk_u^\perp}
\]
and
\[
D_\vp^\clm
=
\restr{P_{\clm^\perp}M_\vp}{\clm^\perp}.
\]
Then the following statements are equivalent:
\begin{enumerate}
\item The pointwise multiplication map $f\mapsto hf$ is well defined on
$\clk_u^\perp$ and defines a unitary operator
\[
W_h:\clk_u^\perp\longrightarrow\clm^\perp,
\qquad
W_hf=hf.
\]
\item The function $h$ is inner.
\end{enumerate}
When these conditions hold,
\[
W_h=\restr{M_h}{\clk_u^\perp}
\]
and
\[
D_\vp^\clm W_h=W_hD_\vp,
\qquad
\vp\in L^\infty(\T).
\]
Thus $W_h$ implements a unitary equivalence between the weighted and
classical families of dual truncated Toeplitz operators.
\end{theorem}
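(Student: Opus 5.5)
The plan is to prove (2)$\Rightarrow$(1) first, then (1)$\Rightarrow$(2), and finally the intertwining identity.

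\textbf{(2)$\Rightarrow$(1).} Suppose $h$ is inner. Then $|h|=1$ a.e., so $M_h$ is a unitary operator on $L^2(\T)$. Moreover $M_h$ maps $\clk_u$ onto $h\clk_u=\clm$. A unitary carries orthogonal complements to orthogonal complements, so $M_h(\clk_u^\perp)=\clm^\perp$. Hence $W_h=\restr{M_h}{\clk_u^\perp}$ is a unitary from $\clk_u^\perp$ onto $\clm^\perp$. Since $M_h$ is unitary, $P_{\clm^\perp}=M_h(I-P_u)M_{\bar h}$. (This is also consistent with Lemma~\ref{lem:proj}, because now $V=\restr{M_h}{\clk_u}$ and $V^*=P_uM_{\bar h}$ holds genuinely.) For $f\in\clk_u^\perp$,
\[
D_\vp^\clm W_hf=M_h(I-P_u)M_{\bar h}(\vp hf)=h(I-P_u)(\vp f)=W_hD_\vp f ,
\]
using $\bar h h=1$. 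This gives the intertwining relation.

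\textbf{(1)$\Rightarrow$(2).} Assume $W_h$ is a well-defined unitary $\clk_u^\perp\to\clm^\perp$. The key step is to test $W_h$ on the co-analytic vectors $\bar z^n\in H_-^2\subset\clk_u^\perp$ and on the vectors $uz^n\in uH^2\subset\clk_u^\perp$. Isometry on $\bar z^n$ gives $\|h\|_2=1$, which carries no information. We therefore use the isometry on finite linear combinations. For a trigonometric polynomial $p=\sum_{n\ge1}c_n\bar z^n\in H_-^2$ we get
\[
\int_\T|h|^2|p|^2\,dm=\int_\T|p|^2\,dm .
\]
Polarizing, $\langle |h|^2\bar z^n,\bar z^m\rangle=\delta_{nm}$ for all $n,m\ge1$. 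This says every nonzero Fourier coefficient of $|h|^2-1$ vanishes. Indeed, the coefficient of index $m-n$ ranges over all integers, and $|h|^2$ is real, so the conjugate indices are handled automatically. The zeroth coefficient is $\|h\|_2^2-1=0$. Hence $|h|^2=1$ a.e. on $\T$.

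Since $h\in H^2$ with unimodular boundary values, we have $h\in H^\infty$, and so $h$ is inner. Well-definedness of $W_h$ (that $hf\in L^2$ for $f\in\clk_u^\perp$) is then automatic; only the isometric property was needed.

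\textbf{Expected obstacle.} The main subtlety is making sure the implication uses only the hypothesis as stated. The isometry is tested on vectors of $H_-^2$, which always lie in $\clk_u^\perp$, so no information about $u$ or the range is needed. Surjectivity and the intertwining relation come for free once $h$ is inner. I expect no other difficulties beyond routine checks of the Fourier-coefficient polarization.
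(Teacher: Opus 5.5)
Your proposal is correct and follows essentially the same route as the paper. For (2)$\Rightarrow$(1) and the intertwining you use that $M_h$ is unitary with $M_h\clk_u=\clm$, so that $P_{\clm^\perp}=M_h(I-P_u)M_{\bar h}$. For (1)$\Rightarrow$(2) you test the isometry only on $H_-^2\subset\clk_u^\perp$ and polarize to force every Fourier coefficient of $|h|^2-1\in L^1(\T)$ to vanish; the paper does the same with polynomials $p(\bar z)$ and density of trigonometric polynomials in $C(\T)$, which is the same uniqueness argument as your computation on the monomials $\bar z^n$.
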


\begin{proof}
Suppose first that $h$ is inner. Then $|h|=1$ almost everywhere on
$\T$, so the multiplication operator
\[
M_h:L^2(\T)\longrightarrow L^2(\T)
\]
is unitary. Since
\[
M_h\clk_u=h\clk_u=\clm,
\]
unitarity gives
\[
M_h(\clk_u^\perp)=\clm^\perp.
\]
Consequently,
\[
W_h:=\restr{M_h}{\clk_u^\perp}:
\clk_u^\perp\longrightarrow\clm^\perp
\]
is unitary.

We next prove the intertwining relation. Since $M_h$ is unitary and
$M_h\clk_u=\clm$, we have
\[
P_\clm=M_hP_uM_{\bar h}.
\]
Let $f\in\clk_u^\perp$. Then
\begin{align*}
D_\vp^\clm W_hf
&=
P_{\clm^\perp}(\vp hf)\\
&=
(I-P_\clm)(\vp hf)\\
&=
\vp hf-M_hP_uM_{\bar h}(\vp hf)\\
&=
\vp hf-hP_u(|h|^2\vp f)\\
&=
\vp hf-hP_u(\vp f)\\
&=
h\bigl(\vp f-P_u(\vp f)\bigr)\\
&=
W_hD_\vp f.
\end{align*}
Therefore,
\[
D_\vp^\clm W_h=W_hD_\vp
\qquad
(\vp\in L^\infty(\T)).
\]

Conversely, suppose that the pointwise multiplication map
\[
W_h:\clk_u^\perp\longrightarrow\clm^\perp,
\qquad
W_hf=hf,
\]
is unitary. Then
\[
\|hf\|_2=\|f\|_2,
\qquad
f\in\clk_u^\perp.
\]
Hence
\[
\int_\T (|h|^2-1)|f|^2\,dm=0,
\qquad
f\in\clk_u^\perp.
\]

Since
\[
H_{-}^2\subseteq\clk_u^\perp,
\]
the preceding identity holds for every $f\in H_{-}^2$. Let $p$ be an analytic polynomial and take
\[
f(z)=\bar z\,p(\bar z).
\]
Then $f\in H_{-}^2$ and, since $|z|=1$ on $\T$,
\[
|f(z)|^2=|p(\bar z)|^2.
\]
Therefore,
\[
\int_\T (|h|^2-1)|p(\bar z)|^2\,dm=0
\]
for every analytic polynomial $p$.

Applying the polarization identity to the sesquilinear form
\[
\mathfrak q(p,q)
:=
\int_\T
(|h|^2-1)p(\bar z)\overline{q(\bar z)}\,dm,
\]
we obtain
\[
\int_\T
(|h|^2-1)p(\bar z)\overline{q(\bar z)}\,dm
=
0
\]
for all analytic polynomials $p$ and $q$. The linear span of the
functions
\[
p(\bar z)\overline{q(\bar z)}
\]
contains all trigonometric polynomials. It follows that
\[
\int_\T (|h|^2-1)\tau\,dm=0
\]
for every trigonometric polynomial $\tau$.

Since trigonometric polynomials are uniformly dense in $C(\T)$ and
\[
|h|^2-1\in L^1(\T),
\]
we conclude that
\[
\int_\T (|h|^2-1)\tau\,dm=0,
\qquad
\tau\in C(\T).
\]
Therefore,
\[
|h|^2-1=0
\quad\text{almost everywhere on }\T.
\]
Thus
\[
|h|=1
\quad\text{almost everywhere on }\T.
\]
Since $h\in H^2$ has unimodular boundary values, $h$ is inner.
\end{proof}

\begin{remark}
Theorem~\ref{thm:rigidity} shows that the natural multiplication map by $h$ identifies the weighted theory with the classical DTTO theory precisely in the inner case. Indeed, if $h$ is inner, then multiplication by $h$ is unitary on $L^2(\T)$ and carries $\clk_u^\perp$ onto $\clm^\perp$. Hence $D_\vp^\clm$ is unitarily equivalent to the usual dual truncated Toeplitz operator $D_\vp$ on $\clk_u^\perp$.

For a general nearly $S^*$-invariant subspace $\clm=h\clk_u$, however, the extremal function $h$ need not be inner. Although multiplication by $h$ is an isometry from $\clk_u$ onto $\clm$ by Hitt's theorem, it does not in general extend to a unitary operator on $L^2(\T)$, nor does it give a unitary map from $\clk_u^\perp$ onto $\clm^\perp$. Thus, outside the inner case, the compression
\[
D_\vp^\clm=\restr{P_{\clm^\perp}M_\vp}{\clm^\perp}
\]
is not, in general, identified with the classical DTTO by multiplication by $h$. The extremal multiplier enters through the projection formula
\[
P_\clm f=hP_u(\bar h f),
\]
and produces a genuinely weighted version of the Ding--Sang dual
truncated Toeplitz operators.
\end{remark}

\subsection{Complex symmetry}

We next show that the weighted dual truncated Toeplitz operators are complex symmetric with respect to a natural conjugation obtained by twisting the model-space conjugation by the extremal function.

\begin{definition}
Let $\clm=h\clk_u$. Since $h\in H^2$ is nonzero, its boundary zero set has measure zero. Define
\[
\omega_\clm(z)
= u(z)\bar z\,\frac{h(z)}{\overline{h(z)}},
\qquad z\in\T,
\]
where the quotient is defined arbitrarily on the null set on which $h=0$. The operator
\[
J_\clm:L^2(\T)\to L^2(\T)
\]
is defined by
\[
J_\clm f
=
\omega_\clm\,\overline{f}
=
u\bar{z}\,\frac{h}{\bar h}\,\overline{f},
\qquad f\in L^2(\T).
\]
Let
\[
\widetilde C_u:L^2(\T)\to L^2(\T),
\qquad
\widetilde C_u f=u\bar z\,\bar f.
\]
Then $\widetilde C_u$ is a conjugation on $L^2(\T)$ whose restriction to $\clk_u$ is the canonical model-space conjugation $C_u$. Thus
\[
J_\clm f
=
\frac{h}{\bar h}\,\widetilde C_u f.
\]
\end{definition}

\begin{remark}
The quotient $h/\bar h$ is unimodular almost everywhere. Hence
$\omega_\clm$ is unimodular almost everywhere, and therefore $J_\clm$ is an anti-linear isometry on $L^2(\T)$.
\end{remark}

\begin{lemma}
The operator $J_\clm$ is a conjugation on $L^2(\T)$. Moreover,
$J_\clm$ leaves both $\clm$ and $\clm^\perp$ invariant.
\end{lemma}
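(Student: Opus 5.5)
The plan is to verify the three defining properties of a conjugation directly from the formula $J_\clm f=\omega_\clm\bar f$, using only that $\omega_\clm$ is unimodular almost everywhere (by the preceding remark). Invariance of $\clm$ then follows from the known behaviour of $C_u$ on $\clk_u$, and invariance of $\clm^\perp$ follows from a general Hilbert-space argument.

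\emph{Step 1: $J_\clm$ is a conjugation.} Anti-linearity is immediate from the complex conjugate in $\omega_\clm\bar f$. For the isometry property I will prove the stronger identity $\langle J_\clm f,J_\clm g\rangle=\langle g,f\rangle$ for all $f,g\in L^2(\T)$. Indeed,
\[
\langle J_\clm f,J_\clm g\rangle=\int_\T \omega_\clm\bar f\,\overline{\omega_\clm}\,g\,dm=\int_\T |\omega_\clm|^2 g\bar f\,dm=\langle g,f\rangle .
\]
Involutivity follows the same way:
\[
J_\clm^2f=\omega_\clm\,\overline{\omega_\clm\bar f}=|\omega_\clm|^2f=f \quad \text{a.e.}
\]
The only care needed is that the null set where $h=0$ on $\T$, on which $h/\bar h$ was defined arbitrarily, is irrelevant. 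This holds because all identities are asserted only almost everywhere.

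\emph{Step 2: $J_\clm\clm\subseteq\clm$.} Let $f=hk$ with $k\in\clk_u$. Off the null set where $h=0$, a pointwise computation on $\T$ gives
\[
J_\clm(hk)=u\bar z\,\frac{h}{\bar h}\,\bar h\,\bar k=h\,(u\bar z\,\bar k)=h\,C_uk .
\]
Since $C_u$ maps $\clk_u$ onto itself (it is the canonical model-space conjugation \eqref{eq:conjuModelspace}), we get $C_uk\in\clk_u$, and hence $J_\clm(hk)\in h\clk_u=\clm$. This step is where the twist by $h/\bar h$ is essential: it converts the conjugated weight $\bar h$ back into $h$. It is the only non-formal point of the argument, and I expect no obstacle beyond the a.e. bookkeeping already handled in Step 1.

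\emph{Step 3: $J_\clm\clm^\perp\subseteq\clm^\perp$.} Let $g\in\clm^\perp$ and $m\in\clm$. Since $m=J_\clm(J_\clm m)$, the identity from Step 1 gives
\[
\langle J_\clm g,m\rangle=\langle J_\clm g,J_\clm(J_\clm m)\rangle=\langle J_\clm m,g\rangle=0,
\]
where the last equality uses $J_\clm m\in\clm$ by Step 2. Hence $J_\clm g\perp\clm$.

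Finally, since $J_\clm$ is an involution, both inclusions upgrade to equalities $J_\clm\clm=\clm$ and $J_\clm\clm^\perp=\clm^\perp$. In particular the restriction of $J_\clm$ to $\clm^\perp$ is a conjugation there, which is what the subsequent complex-symmetry result will use.
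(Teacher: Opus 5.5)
Your proof is correct and follows essentially the same route as the paper. Unimodularity of $\omega_\clm$ gives the conjugation properties, the pointwise identity $J_\clm(hk)=hC_uk$ gives invariance of $\clm$, and the identity $\langle J_\clm f,x\rangle=\langle J_\clm x,f\rangle$ transfers invariance to $\clm^\perp$; the only difference is that you derive this last identity explicitly from $\langle J_\clm f,J_\clm g\rangle=\langle g,f\rangle$, whereas the paper cites it as a standard fact about conjugations.
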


\begin{proof}
The map $J_\clm$ is conjugate-linear. Since $|\omega_\clm|=1$ almost everywhere, it is isometric. Moreover,
\[
J_\clm^2 f
=
J_\clm(\omega_\clm\bar f)
=
\omega_\clm\,\overline{\omega_\clm\bar f}
=
|\omega_\clm|^2 f
=
f.
\]
Thus $J_\clm$ is a conjugation on $L^2(\T)$.

We next show that $J_\clm$ preserves $\clm$. Let $x=hk\in\clm$, where $k\in\clk_u$. Then
\[
J_\clm(hk)
=
u\bar{z}\,\frac{h}{\bar h}\,\overline{hk}
=
u\bar{z}\,\frac{h}{\bar h}\,\bar h\,\bar k
=
h(u\bar{z}\,\bar k)
=
hC_u k.
\]
Since $C_u k\in\clk_u$, it follows that
\[
J_\clm(hk)\in h\clk_u=\clm.
\]
Hence $J_\clm\clm\subseteq \clm$. Since $J_\clm^2=I$, we also have $J_\clm\clm=\clm$.

Now let $f\in\clm^\perp$ and $x\in\clm$. Using the standard identity for conjugations,
\[
\langle J_\clm f,x\rangle
=
\langle J_\clm x,f\rangle,
\]
and since $J_\clm x\in\clm$ while $f\perp\clm$, we obtain
\[
\langle J_\clm f,x\rangle=0.
\]
Thus $J_\clm f\in\clm^\perp$. Therefore, $J_\clm$ leaves
$\clm^\perp$ invariant as well.
\end{proof}

\begin{theorem}\label{thm:complex_symmetry}
For every $\vp\in L^\infty(\T)$, the weighted dual truncated Toeplitz
operator $D_\vp^\clm$ is complex symmetric with respect to the
conjugation $J_\clm|_{\clm^\perp}$; that is,
\[
J_\clm D_\vp^\clm J_\clm
=
(D_\vp^\clm)^*
\quad \text{on } \clm^\perp.
\]
\end{theorem}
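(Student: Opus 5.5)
The plan is to prove the identity first on all of $L^2(\T)$ and then compress to $\clm^\perp$. The key observation is that $J_\clm$ is a multiplication by a unimodular function composed with complex conjugation. It therefore intertwines multiplication operators in the simple way
\[
J_\clm M_\vp f=\omega_\clm\,\overline{\vp f}=\bar\vp\,\omega_\clm\bar f=M_{\bar\vp}J_\clm f,\qquad f\in L^2(\T),
\]
that is, $J_\clm M_\vp J_\clm=M_{\bar\vp}$, using $J_\clm^2=I$.

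Next I will show that $J_\clm$ commutes with $P_{\clm^\perp}$, which uses the preceding lemma that $J_\clm$ leaves both $\clm$ and $\clm^\perp$ invariant. For $f\in L^2(\T)$, write $f=P_\clm f+P_{\clm^\perp}f$. Then $J_\clm f=J_\clm P_\clm f+J_\clm P_{\clm^\perp}f$, and the first summand lies in $\clm$ while the second lies in $\clm^\perp$. Uniqueness of the orthogonal decomposition then gives $P_{\clm^\perp}J_\clm=J_\clm P_{\clm^\perp}$. The anti-linearity of $J_\clm$ is harmless here, since only additivity and the invariance statements are used.

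Combining the two facts, for $f\in\clm^\perp$ we have $J_\clm f\in\clm^\perp$, and
\[
J_\clm D_\vp^\clm J_\clm f=J_\clm P_{\clm^\perp}M_\vp J_\clm f=P_{\clm^\perp}J_\clm M_\vp J_\clm f=P_{\clm^\perp}(\bar\vp f)=D_{\bar\vp}^\clm f.
\]
The identity $(D_\vp^\clm)^*=D_{\bar\vp}^\clm$, established just before the boundedness subsection, then yields $J_\clm D_\vp^\clm J_\clm=(D_\vp^\clm)^*$ on $\clm^\perp$.

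Finally, I will note that $J_\clm|_{\clm^\perp}$ is indeed a conjugation on $\clm^\perp$, since it is an anti-linear isometric involution mapping $\clm^\perp$ into itself. This completes the complex symmetry statement.

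There is no serious obstacle. The only delicate point is that $h$ may be unbounded, so one might worry about the meaning of $h/\bar h$. However, $\omega_\clm$ is unimodular almost everywhere, so $J_\clm$ is a genuinely bounded anti-linear operator on $L^2(\T)$, and no use of the formal expression $M_hP_uM_{\bar h}$ is needed. The invariance of $\clm$ was already verified pointwise in the lemma via $J_\clm(hk)=hC_uk$.
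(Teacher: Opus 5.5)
Your proof is correct and follows essentially the same route as the paper, which likewise commutes $J_\clm$ past $P_{\clm^\perp}$ using invariance, computes $J_\clm(\vp J_\clm f)=\bar\vp f$, and then invokes $(D_\vp^\clm)^*=D_{\bar\vp}^\clm$. Your uniqueness-of-orthogonal-decomposition argument for $J_\clm P_{\clm^\perp}=P_{\clm^\perp}J_\clm$ uses the invariance of both $\clm$ and $\clm^\perp$, and so justifies a step the paper only asserts from the invariance of $\clm^\perp$.
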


\begin{proof}
Since $J_\clm$ leaves $\clm^\perp$ invariant, it commutes with the
orthogonal projection onto $\clm^\perp$ in the sense that
\[
J_\clm P_{\clm^\perp}
=
P_{\clm^\perp}J_\clm.
\]
Let $f\in\clm^\perp$. Then
\[
J_\clm D_\vp^\clm J_\clm f
=
J_\clm P_{\clm^\perp}M_\vp J_\clm f
=
P_{\clm^\perp}J_\clm(\vp J_\clm f).
\]
Since $J_\clm f=\omega_\clm\bar f$, we have
\[
J_\clm(\vp J_\clm f)
=
\omega_\clm\,\overline{\vp\,\omega_\clm\bar f}
=
\omega_\clm\,\bar\vp\,\bar\omega_\clm\,f
=
\bar\vp f.
\]
Therefore
\[
J_\clm D_\vp^\clm J_\clm f
=
P_{\clm^\perp}(\bar\vp f)
=
D_{\bar\vp}^\clm f.
\]

It remains only to identify the adjoint. For $f,g\in\clm^\perp$,
\[
\langle D_\vp^\clm f,g\rangle
=
\langle P_{\clm^\perp}(\vp f),g\rangle
=
\langle \vp f,g\rangle
=
\langle f,\bar\vp g\rangle
=
\langle f,P_{\clm^\perp}(\bar\vp g)\rangle
=
\langle f,D_{\bar\vp}^\clm g\rangle.
\]
Hence
\[
(D_\vp^\clm)^*=D_{\bar\vp}^\clm.
\]
Combining this with the previous identity gives
\[
J_\clm D_\vp^\clm J_\clm
=
(D_\vp^\clm)^*.
\]
Thus $D_\vp^\clm$ is complex symmetric with respect to
$J_\clm|_{\clm^\perp}$.
\end{proof}

\subsection{Block matrix representations}

We first decompose the ambient space as
\[
L^2(\T)=\clm\oplus\clm^\perp.
\]
As in the classical DTTO setting, the multiplication operator $M_\vp$
admits a $2\times 2$ block representation with respect to this
decomposition. The only new feature is that the top-left compression is
now governed by the extremal multiplier $h$.

Let
\[
V:\clk_u\to \clm,\qquad Vk=hk,
\]
be the canonical unitary induced by Hitt's theorem. For
$\vp\in L^\infty(\T)$, define the bounded operator
\[
A_\vp^\clm:\clk_u\to \clk_u
\]
by
\[
A_\vp^\clm:=V^*P_\clm M_\vp V.
\]
Equivalently,
\[
V A_\vp^\clm V^*
=
P_\clm M_\vp\big|_{\clm}.
\]

The operator $A_\vp^\clm$ is represented by the bounded sesquilinear
form
\[
\langle A_\vp^\clm k,l\rangle
=
\int_\T \vp |h|^2 k\bar l\,dm,
\qquad k,l\in\clk_u.
\]
Although this form resembles a truncated Toeplitz form with symbol $|h|^2\vp$, that function need only belong to $L^1(\T)$; we therefore retain the notation $A_\vp^\clm$.

\begin{definition}
Let $\clm=h\clk_u$. The \emph{weighted big truncated Hankel operator}
with symbol $\vp\in L^\infty(\T)$ is
\[
B_\vp^\clm:\clm\to\clm^\perp,\qquad
B_\vp^\clm=P_{\clm^\perp}M_\vp\big|_{\clm}.
\]
Equivalently,
\[
B_\vp^\clm=P_{\clm^\perp}M_\vp P_\clm\big|_{\clm}.
\]
Its adjoint is
\[
(B_\vp^\clm)^*
=
P_\clm M_{\bar\vp}\big|_{\clm^\perp}.
\]
\end{definition}

\begin{proposition}\label{prop:block_matrix}
With respect to the orthogonal decomposition
\[
L^2(\T)=\clm\oplus\clm^\perp,
\]
the multiplication operator $M_\vp$ has the block matrix representation
\[
M_\vp
=
\begin{bmatrix}
V A_\vp^\clm V^* & (B_{\bar\vp}^\clm)^*\\
B_\vp^\clm & D_\vp^\clm
\end{bmatrix}.
\]
Here
\[
D_\vp^\clm=P_{\clm^\perp}M_\vp\big|_{\clm^\perp}.
\]
\end{proposition}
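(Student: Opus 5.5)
The plan is to decompose $M_\vp$ using the identity $I = P_\clm + P_{\clm^\perp}$ on both sides. Write
\[
M_\vp = (P_\clm + P_{\clm^\perp}) M_\vp (P_\clm + P_{\clm^\perp}).
\]
This produces four pieces:
\[
P_\clm M_\vp P_\clm,\qquad P_\clm M_\vp P_{\clm^\perp},\qquad P_{\clm^\perp} M_\vp P_\clm,\qquad P_{\clm^\perp} M_\vp P_{\clm^\perp}.
\]
These are exactly the four entries of the operator matrix with respect to $L^2(\T)=\clm\oplus\clm^\perp$. Each piece is bounded, since $\|M_\vp\|=\|\vp\|_\infty$ and the projections are contractions. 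The remaining work is to identify each corner with the operator named in the statement.

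The bottom row is read off from the definitions. The lower-left corner, restricted to $\clm$, is $P_{\clm^\perp}M_\vp|_\clm = B_\vp^\clm$. The lower-right corner, restricted to $\clm^\perp$, is $D_\vp^\clm$ by Definition~\ref{def:weighted_DTTO}.

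For the upper-right corner, I would verify directly that $(B_{\bar\vp}^\clm)^* = P_\clm M_\vp|_{\clm^\perp}$. Take $f\in\clm^\perp$ and $x\in\clm$. Then
\[
\langle P_\clm(\vp f), x\rangle = \langle f, \bar\vp x\rangle = \langle f, P_{\clm^\perp}(\bar\vp x)\rangle = \langle f, B_{\bar\vp}^\clm x\rangle.
\]
This is consistent with the stated formula $(B_\psi^\clm)^* = P_\clm M_{\bar\psi}|_{\clm^\perp}$ taken at $\psi = \bar\vp$.

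For the upper-left corner, I use $P_\clm = VV^*$ from Lemma~\ref{lem:proj} together with $V^*V = I_{\clk_u}$. The restriction of $VV^*$ to $\clm$ is the identity on $\clm$, so
\[
P_\clm M_\vp|_\clm = VV^*P_\clm M_\vp VV^*|_\clm = V A_\vp^\clm V^*|_\clm .
\]
Here $A_\vp^\clm = V^*P_\clm M_\vp V$, as in the definition preceding the proposition.

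The main point needing care is this upper-left identification. It must be done with the bounded operators $V$ and $V^*$ rather than with the formal expression $M_hP_uM_{\bar h}$, because $h$ may be unbounded. Working through $V$ and $V^*$ avoids the issue: every operator involved is bounded, and no product $|h|^2\vp$ ever has to act on $L^2(\T)$.
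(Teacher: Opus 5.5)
Your proposal is correct and follows essentially the same route as the paper. Both read off the four compressions of $M_\vp$ with respect to $L^2(\T)=\clm\oplus\clm^\perp$, take the bottom row from the definitions, and obtain the top row via $P_\clm=VV^*$ and the adjoint of $B_{\bar\vp}^\clm$; your inner-product check that $(B_{\bar\vp}^\clm)^*=P_\clm M_\vp|_{\clm^\perp}$ just fills in a step the paper states without computation.
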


\begin{proof}
The four blocks are the four compressions of $M_\vp$ with respect to
$L^2(\T)=\clm\oplus\clm^\perp$.

The top-left block is
\[
\restr{P_\clm M_\vp}{\clm}
=
V A_\vp^\clm V^*
\]
by the definition of $A_\vp^\clm$.

The bottom-left block is, by definition,
\[
\restr{P_{\clm^\perp}M_\vp}{\clm}
=
B_\vp^\clm.
\]

The top-right block is
\[
\restr{P_\clm M_\vp}{\clm^\perp}.
\]
Since
\[
(B_{\bar\vp}^\clm)^*
=
\restr{P_\clm M_\vp}{\clm^\perp},
\]
the top-right block is $(B_{\bar\vp}^\clm)^*$.

Finally, the bottom-right block is
\[
\restr{P_{\clm^\perp}M_\vp}{\clm^\perp}
=
D_\vp^\clm.
\]
This proves the block representation.
\end{proof}

\begin{theorem}\label{thm:block_relations}
For $\vp,\psi\in L^\infty(\T)$, the following identities hold:
\begin{enumerate}
\item
\[
V A_{\vp\psi}^\clm V^* - V A_\vp^\clm V^*\,V A_\psi^\clm V^* =(B_{\bar\vp}^\clm)^*B_\psi^\clm.
\]

Equivalently, on $\clk_u$,
\[
A_{\vp\psi}^\clm-A_\vp^\clm A_\psi^\clm = V^*(B_{\bar\vp}^\clm)^*B_\psi^\clm V.
\]

\item
\[
D_{\vp\psi}^\clm-D_\vp^\clm D_\psi^\clm = B_\vp^\clm (B_{\bar\psi}^\clm)^*.
\]

\item
\[
B_{\vp\psi}^\clm - B_\vp^\clm V A_\psi^\clm V^*
=D_\vp^\clm B_\psi^\clm.
\]

\item
\[
(B_{\overline{\vp\psi}}^\clm)^* - V A_\vp^\clm V^*(B_{\bar\psi}^\clm)^* = (B_{\bar\vp}^\clm)^*D_\psi^\clm.
\]
\end{enumerate}
\end{theorem}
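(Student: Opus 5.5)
The four identities all come from a single fact: $M_{\vp\psi}=M_\vp M_\psi$ on $L^2(\T)$. Each identity is one entry of the product of the block matrices of Proposition~\ref{prop:block_matrix}. We work throughout with the decomposition $L^2(\T)=\clm\oplus\clm^\perp$ and the projections $P_\clm=VV^*$ and $P_{\clm^\perp}=I-P_\clm$ from Lemma~\ref{lem:proj}. Since $P_\clm+P_{\clm^\perp}=I$, we can insert a resolution of the identity between the two factors:
\[
M_{\vp\psi}=M_\vp P_\clm M_\psi+M_\vp P_{\clm^\perp}M_\psi .
\]
This holds as bounded operators on $L^2(\T)$, because $\vp,\psi\in L^\infty(\T)$. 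Only the bounded operators $P_\clm$ and $V$ appear, so the possible unboundedness of $h$ never enters.

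Next, compress both sides on the left by $P_\clm$ or $P_{\clm^\perp}$ and restrict on the right to $\clm$ or $\clm^\perp$. This gives four equations.

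For (1), compress by $P_\clm(\cdot)|_\clm$. The left side is $P_\clm M_{\vp\psi}|_\clm=VA^\clm_{\vp\psi}V^*$. The first term on the right is $(P_\clm M_\vp|_\clm)(P_\clm M_\psi|_\clm)=VA_\vp^\clm V^*\,VA_\psi^\clm V^*$. The second is $(P_\clm M_\vp|_{\clm^\perp})(P_{\clm^\perp}M_\psi|_\clm)=(B^\clm_{\bar\vp})^*B^\clm_\psi$, using the adjoint formula $(B_{\bar\vp}^\clm)^*=P_\clm M_\vp|_{\clm^\perp}$ from the definition. Conjugating by $V$ and using $V^*V=I_{\clk_u}$ gives the equivalent form on $\clk_u$.

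For (2), compress by $P_{\clm^\perp}(\cdot)|_{\clm^\perp}$. The left side is $D^\clm_{\vp\psi}$. The two terms on the right are $B^\clm_\vp(B^\clm_{\bar\psi})^*$ and $D^\clm_\vp D^\clm_\psi$.

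For (3), use $P_{\clm^\perp}(\cdot)|_\clm$. This gives $B^\clm_{\vp\psi}=B^\clm_\vp VA^\clm_\psi V^*+D^\clm_\vp B^\clm_\psi$.

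For (4), use $P_\clm(\cdot)|_{\clm^\perp}$. This gives $(B^\clm_{\overline{\vp\psi}})^*=VA^\clm_\vp V^*(B^\clm_{\bar\psi})^*+(B^\clm_{\bar\vp})^*D^\clm_\psi$.

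Rearranging each equation gives the stated identities. There is no real obstacle here; the only care needed is bookkeeping. First, the adjoint conventions must be matched: $(B^\clm_{\bar\psi})^*=P_\clm M_\psi|_{\clm^\perp}$. Second, $VA^\clm_\vp V^*$ must be read as an operator on $\clm$, equal to $P_\clm M_\vp|_\clm$; the definition $A^\clm_\vp=V^*P_\clm M_\vp V$ together with $VV^*=P_\clm$ gives this directly.
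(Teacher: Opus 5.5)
Your proof is correct and is essentially the paper's argument. Inserting $I=P_\clm+P_{\clm^\perp}$ between $M_\vp$ and $M_\psi$ and compressing is exactly the entrywise comparison of the block product $M_\vp M_\psi$ with the block matrix of $M_{\vp\psi}$ from Proposition~\ref{prop:block_matrix}, and your handling of the adjoint convention $(B_{\bar\psi}^\clm)^*=P_\clm M_\psi|_{\clm^\perp}$ and of $VA_\vp^\clm V^*=P_\clm M_\vp|_\clm$ matches what the paper uses.
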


\begin{proof}
We use the identity
\[
M_{\vp\psi}=M_\vp M_\psi.
\]
By Proposition~\ref{prop:block_matrix}, we have
\[
M_{\vp\psi}
=
\begin{bmatrix}
V A_{\vp\psi}^\clm V^* & (B_{\overline{\vp\psi}}^\clm)^*\\
B_{\vp\psi}^\clm & D_{\vp\psi}^\clm
\end{bmatrix},
\]
while
\[
M_\vp
=
\begin{bmatrix}
V A_\vp^\clm V^* & (B_{\bar\vp}^\clm)^*\\
B_\vp^\clm & D_\vp^\clm
\end{bmatrix}
\]
and
\[
M_\psi
=
\begin{bmatrix}
V A_\psi^\clm V^* & (B_{\bar\psi}^\clm)^*\\
B_\psi^\clm & D_\psi^\clm
\end{bmatrix}.
\]
Multiplying the two block matrices for $M_\vp$ and $M_\psi$ and comparing
the four entries with the block matrix for $M_{\vp\psi}$ gives the desired
identities.

Indeed, comparison of the top-left entry gives
\[
V A_{\vp\psi}^\clm V^*
=
V A_\vp^\clm V^*\,V A_\psi^\clm V^*
+
(B_{\bar\vp}^\clm)^*B_\psi^\clm,
\]
which proves (1).

Comparison of the bottom-right entry gives
\[
D_{\vp\psi}^\clm
=
B_\vp^\clm(B_{\bar\psi}^\clm)^*
+
D_\vp^\clm D_\psi^\clm,
\]
which proves (2).

Comparison of the bottom-left entry gives
\[
B_{\vp\psi}^\clm
=
B_\vp^\clm V A_\psi^\clm V^*
+
D_\vp^\clm B_\psi^\clm,
\]
which proves (3).

Finally, comparison of the top-right entry gives
\[
(B_{\overline{\vp\psi}}^\clm)^*
=
V A_\vp^\clm V^*(B_{\bar\psi}^\clm)^*
+
(B_{\bar\vp}^\clm)^*D_\psi^\clm,
\]
which proves (4).
\end{proof}

\begin{remark}
When $h$ is inner, multiplication by $h$ is unitary on $L^2(\T)$ and the
above block matrix is unitarily equivalent to the classical Ding--Sang
block matrix for DTTOs. For a general extremal multiplier $h$, however,
the top-left block is not simply an ordinary truncated Toeplitz operator
with a bounded symbol. It is the weighted compression $A_\vp^\clm$,
encoded by the sesquilinear form
\[
\langle A_\vp^\clm k,l\rangle
=
\int_\T \vp |h|^2 k\bar l\,dm.
\]
Thus the weighted Hankel operators $B_\vp^\clm$ retain genuinely new
information coming from the extremal function $h$.
\end{remark}

We now decompose $D_\vp^\clm$ with respect to the orthogonal direct sum
\[
\clm^\perp=\cln\oplus H^2_{-},
\qquad
\cln=H^2\ominus\clm.
\]

\begin{theorem}\label{thm:N_Hminus_block}
With respect to the decomposition
\[
\clm^\perp=\cln\oplus H^2_{-},
\]
the operator $D_\vp^\clm$ has the block matrix representation
\[
D_\vp^\clm
=
\begin{bmatrix}
T_\vp^\cln & P_\cln H_{\bar\vp}^*\\
H_\vp|_{\cln} & S_\vp
\end{bmatrix},
\]
where
\[
T_\vp^\cln=\restr{P_\cln M_\vp}{\cln},
\]
\[
S_\vp=\restr{P_{-}M_\vp}{H_{-}^2},
\]
\[
H_\vp=\restr{P_{-}M_\vp}{H^2},
\]
and
\[
H_{\bar\vp}^*=\restr{P_{+}M_\vp}{H_{-}^2}.
\]
\end{theorem}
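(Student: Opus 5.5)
The four entries of the block matrix are the compressions $P_X D_\vp^\clm|_Y$ for $X,Y\in\{\cln,H^2_-\}$. Since $\cln\oplus H^2_-=\clm^\perp$, we have $P_\cln P_{\clm^\perp}=P_\cln$ and $P_-P_{\clm^\perp}=P_-$. So each entry reduces to $P_X M_\vp|_Y$, and the task is to identify these four operators with the ones named in the statement. The key structural facts are $P_\cln=P_+-P_\clm$ and $P_-P_\cln=0$, because $\cln\subset H^2$.

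First I handle the two entries involving $P_-$ on the left. For $f\in\cln$ we get $P_-D_\vp^\clm f=P_-(\vp f)=H_\vp f$, since $\cln\subset H^2$ and $H_\vp=P_-M_\vp|_{H^2}$. This gives the bottom-left entry $H_\vp|_\cln$. For $g\in H^2_-$ we get $P_-D_\vp^\clm g=P_-(\vp g)=S_\vp g$, which is the bottom-right entry. This is essentially the same computation used in the proof of Theorem~\ref{thm:boundedness}.

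Next come the two entries with $P_\cln$ on the left. For $f\in\cln$, $P_\cln D_\vp^\clm f=P_\cln(\vp f)=T_\vp^\cln f$ by definition, which is the top-left entry. For $g\in H^2_-$, I use $P_\cln=P_\cln P_+$, which holds because $\cln\subset H^2$. Then
\[
P_\cln D_\vp^\clm g=P_\cln(\vp g)=P_\cln P_+(\vp g)=P_\cln H_{\bar\vp}^*g,
\]
using the identity $H_{\bar\vp}^*=P_+M_\vp|_{H^2_-}$ recalled in the preliminaries. This is the top-right entry. As a consistency check, this entry should be the adjoint of $H_{\bar\vp}|_\cln$, in accordance with $(D_\vp^\clm)^*=D_{\bar\vp}^\clm$.

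Finally I assemble the pieces. Writing $f\in\clm^\perp$ as $f_++f_-$ and $D_\vp^\clm f=P_\cln D_\vp^\clm f+P_-D_\vp^\clm f$, the block form follows by linearity.

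The only point needing care is the justification of $P_\cln P_{\clm^\perp}=P_\cln$ without invoking the possibly unbounded formula $hP_u(\bar h\,\cdot)$. I would argue abstractly: $\cln\subseteq\clm^\perp$, so the projections commute and the product is the projection onto the smaller space. The same argument gives $P_-P_{\clm^\perp}=P_-$. With this, no use of the Hartmann--Ross formula is required, and everything stays within bounded operators on $L^2(\T)$.
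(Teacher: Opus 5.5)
Your proposal is correct and follows essentially the same route as the paper. The paper writes $P_{\clm^\perp}=P_\cln+P_-$, expands $(P_\cln+P_-)M_\vp(P_\cln+P_-)$ into four compressions, and identifies the top-right entry via $P_\cln=P_\cln P_+$, which is exactly your entry-by-entry computation.
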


\begin{proof}
Since $\clm\subset H^2$, we have
\[
\clm^\perp=(H^2\ominus\clm)\oplus H^2_{-}
=
\cln\oplus H^2_{-}.
\]
Hence
\[
P_{\clm^\perp}=P_\cln+P_{-}.
\]
Expanding the compression gives
\[
D_\vp^\clm
=
(P_\cln+P_{-})M_\vp(P_\cln+P_{-}).
\]
Thus
\[
D_\vp^\clm=P_\cln M_\vp P_\cln+P_\cln M_\vp P_{-} +
P_{-}M_\vp P_\cln + P_{-}M_\vp P_{-}.
\]
The four entries are as follows. The top-left entry is
\[
\restr{P_\cln M_\vp}{\cln}=T_\vp^\cln.
\]
The bottom-right entry is
\[
\restr{P_{-}M_\vp}{H_{-}^2}=S_\vp.
\]
Since $\cln\subset H^2$, the bottom-left entry is
\[
\restr{P_{-}M_\vp}{\cln}=\restr{H_\vp}{\cln}.
\]
Finally, since $P_\cln=P_\cln P_{+}$, the top-right entry is
\[
\restr{P_\cln M_\vp}{H_{-}^2} =\restr{P_\cln P_{+}M_\vp}{H_{-}^2}
=P_\cln H_{\bar\vp}^*.
\]
This proves the block representation.
\end{proof}

\section{Algebraic relations and zero products}

To express the defect relations in a form comparable with the classical DTTO identities, we pull back the weighted big truncated Hankel operator along the canonical unitary
\[
V:\clk_u\to \clm,\qquad Vk=hk.
\]
This gives an operator from the model space $\clk_u$ into the orthogonal complement $\clm^\perp$.

\begin{definition}\label{def:weighted_hankel}
For $\vp\in L^\infty(\T)$, the \emph{weighted truncated Hankel operator} associated with $\clm=h\clk_u$ is
\[
H_\vp^\clm:\clk_u\to \clm^\perp,
\qquad
H_\vp^\clm=B_\vp^\clm V
=
P_{\clm^\perp}M_\vp V.
\]
Its adjoint is
\[
(H_\vp^\clm)^*
=
V^*(B_\vp^\clm)^*
=
V^*M_{\bar\vp}P_{\clm^\perp}.
\]
\end{definition}

\begin{proposition}\label{prop:weighted-semicommutator}
For every $\vp,\psi\in L^\infty(\T)$,
\[
D_{\vp\psi}^\clm
-
D_\vp^\clm D_\psi^\clm
=
H_\vp^\clm(H_{\bar\psi}^\clm)^*.
\]
\end{proposition}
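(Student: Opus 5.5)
The plan is to derive the identity from Theorem~\ref{thm:block_relations}(2) together with the factorization $P_\clm=VV^*$ of Lemma~\ref{lem:proj}. Theorem~\ref{thm:block_relations}(2) already gives
\[
D_{\vp\psi}^\clm-D_\vp^\clm D_\psi^\clm=B_\vp^\clm(B_{\bar\psi}^\clm)^*,
\]
so it only remains to rewrite the right-hand side in terms of the weighted truncated Hankel operators $H_\vp^\clm=B_\vp^\clm V$ of Definition~\ref{def:weighted_hankel}.

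The key step is to insert the identity of $\clm$ as $VV^*$. Since $V$ is unitary from $\clk_u$ onto $\clm$, we have $VV^*|_{\clm}=I_\clm$. The operator $(B_{\bar\psi}^\clm)^*=P_\clm M_\psi|_{\clm^\perp}$ takes values in $\clm$, so
\[
B_\vp^\clm(B_{\bar\psi}^\clm)^*=B_\vp^\clm VV^*(B_{\bar\psi}^\clm)^*=(B_\vp^\clm V)(B_{\bar\psi}^\clm V)^*.
\]
By definition this equals $H_\vp^\clm(H_{\bar\psi}^\clm)^*$. Note that $(B_{\bar\psi}^\clm V)^*=V^*(B_{\bar\psi}^\clm)^*$, and this agrees with the adjoint formula in Definition~\ref{def:weighted_hankel} applied to the symbol $\bar\psi$.

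As a sanity check that avoids the block calculus, one can compute directly on $f\in\clm^\perp$ with $P_{\clm^\perp}=I-VV^*$:
\[
D_{\vp\psi}^\clm f-D_\vp^\clm D_\psi^\clm f=P_{\clm^\perp}M_\vp\bigl(\psi f-P_{\clm^\perp}(\psi f)\bigr)=P_{\clm^\perp}M_\vp VV^*M_\psi f.
\]
Since $f\in\clm^\perp$, we have $f=P_{\clm^\perp}f$, so $V^*M_\psi f=V^*M_{\psi}P_{\clm^\perp}f=(H_{\bar\psi}^\clm)^*f$. This gives the claim directly.

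The only point needing care is the domain bookkeeping. $V^*$ is defined on all of $L^2(\T)$, and we need $V^*$ applied to $\psi f$ to be meaningful in the reproducing-kernel sense of Lemma~\ref{lem:proj}. This holds because $\psi f\in L^2(\T)$, so no unboundedness of $h$ enters: everything is expressed through the bounded operators $V$ and $V^*$, and never through $M_h$ or $M_{\bar h}$ separately.
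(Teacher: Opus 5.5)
Your proposal is correct and is essentially the paper's proof: you start from Theorem~\ref{thm:block_relations}(2) and insert $VV^*=P_\clm$ in front of $(B_{\bar\psi}^\clm)^*$. This is harmless because that operator takes values in $\clm$, and your direct check via $P_{\clm^\perp}M_\vp P_\clm M_\psi f$ is also valid; it simply unpacks the same block computation without going through Proposition~\ref{prop:block_matrix}.
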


\begin{proof}
By Theorem~\ref{thm:block_relations}\textup{(2)},
\[
D_{\vp\psi}^\clm-D_\vp^\clm D_\psi^\clm
=
B_\vp^\clm(B_{\bar\psi}^\clm)^*.
\]
Since
\[
H_\vp^\clm=B_\vp^\clm V
\]
and
\[
(H_{\bar\psi}^\clm)^*
=
V^*(B_{\bar\psi}^\clm)^*,
\]
we have
\begin{align*}
H_\vp^\clm(H_{\bar\psi}^\clm)^*
&=
B_\vp^\clm VV^*(B_{\bar\psi}^\clm)^* \\
&=
B_\vp^\clm P_\clm(B_{\bar\psi}^\clm)^*.
\end{align*}
Since $(B_{\bar\psi}^\clm)^*$ has range in $\clm$,
\[
P_\clm(B_{\bar\psi}^\clm)^*
=
(B_{\bar\psi}^\clm)^*.
\]
Consequently,
\[
H_\vp^\clm(H_{\bar\psi}^\clm)^*
=
B_\vp^\clm(B_{\bar\psi}^\clm)^*,
\]
and the result follows.
\end{proof}
The following identities are the weighted analogues of the standard DTTO defect relations.

\begin{theorem}\label{thm:defect}
For $\vp\in L^\infty(\T)$, the weighted dual truncated Toeplitz operator
$D_\vp^\clm$ satisfies
\[
D_\vp^\clm(D_\vp^\clm)^*
=
D_{|\vp|^2}^\clm
-
H_\vp^\clm(H_\vp^\clm)^*
\]
and
\[
(D_\vp^\clm)^*D_\vp^\clm
=
D_{|\vp|^2}^\clm
-
H_{\bar\vp}^\clm(H_{\bar\vp}^\clm)^* .
\]
\end{theorem}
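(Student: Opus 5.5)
The plan is to specialize Proposition~\ref{prop:weighted-semicommutator} to suitable pairs of symbols and then use the adjoint identity $(D_\vp^\clm)^*=D_{\bar\vp}^\clm$ established just before Theorem~\ref{thm:boundedness}. No analysis of the extremal multiplier $h$ is needed. All the weighting is already absorbed into $H_\vp^\clm=P_{\clm^\perp}M_\vp V$ and the relation $VV^*=P_\clm$, which were used in proving the proposition.

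For the first identity, take the pair $(\vp,\bar\vp)$ in Proposition~\ref{prop:weighted-semicommutator}, so that $\psi=\bar\vp$ and $\bar\psi=\vp$. Since $\vp\bar\vp=|\vp|^2$, the proposition gives
\[
D_{|\vp|^2}^\clm-D_\vp^\clm D_{\bar\vp}^\clm=H_\vp^\clm(H_\vp^\clm)^*.
\]
Substituting $D_{\bar\vp}^\clm=(D_\vp^\clm)^*$ and rearranging yields $D_\vp^\clm(D_\vp^\clm)^*=D_{|\vp|^2}^\clm-H_\vp^\clm(H_\vp^\clm)^*$.

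For the second identity, take the pair $(\bar\vp,\vp)$, so that $\psi=\vp$ and $\bar\psi=\bar\vp$. Since $\bar\vp\vp=|\vp|^2$, the proposition gives
\[
D_{|\vp|^2}^\clm-D_{\bar\vp}^\clm D_\vp^\clm=H_{\bar\vp}^\clm(H_{\bar\vp}^\clm)^*.
\]
Rewriting $D_{\bar\vp}^\clm$ as $(D_\vp^\clm)^*$ gives $(D_\vp^\clm)^*D_\vp^\clm=D_{|\vp|^2}^\clm-H_{\bar\vp}^\clm(H_{\bar\vp}^\clm)^*$.

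There is no real obstacle. The only point to check is the bookkeeping of conjugates: the adjoint factor in Proposition~\ref{prop:weighted-semicommutator} carries the symbol $\bar\psi$. With the choices above this gives exactly $H_\vp^\clm(H_\vp^\clm)^*$ in the first case and $H_{\bar\vp}^\clm(H_{\bar\vp}^\clm)^*$ in the second. As a consistency check, both right-hand sides are differences of the self-adjoint operator $D_{|\vp|^2}^\clm$ and a positive operator, which matches the left-hand sides being positive.
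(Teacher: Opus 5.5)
Your proof is correct and is essentially the paper's argument. The paper expands $D_{|\vp|^2}^\clm=P_{\clm^\perp}M_\vp(P_{\clm^\perp}+P_\clm)M_{\bar\vp}P_{\clm^\perp}$ directly and uses $P_\clm=VV^*$, which is exactly the computation behind Proposition~\ref{prop:weighted-semicommutator}; so specializing that proposition to the pairs $(\vp,\bar\vp)$ and $(\bar\vp,\vp)$, as you do, just packages the same argument more economically, and your conjugate bookkeeping and use of $(D_\vp^\clm)^*=D_{\bar\vp}^\clm$ are right.
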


\begin{proof}
Since
\[
I=P_{\clm^\perp}+P_\clm,
\]
we have
\begin{align*}
D_{|\vp|^2}^\clm
&=
P_{\clm^\perp}M_{|\vp|^2}P_{\clm^\perp}  \\
&=
P_{\clm^\perp}M_\vp
(P_{\clm^\perp}+P_\clm)
M_{\bar\vp}P_{\clm^\perp} \\
&=
P_{\clm^\perp}M_\vp P_{\clm^\perp}M_{\bar\vp}P_{\clm^\perp}
+
P_{\clm^\perp}M_\vp P_\clm M_{\bar\vp}P_{\clm^\perp} \\
&=
D_\vp^\clm D_{\bar\vp}^\clm
+
P_{\clm^\perp}M_\vp P_\clm M_{\bar\vp}P_{\clm^\perp}.
\end{align*}
Since
\[
(D_\vp^\clm)^*=D_{\bar\vp}^\clm
\]
and
\[
P_\clm=VV^*,
\]
the second term becomes
\begin{align*}
P_{\clm^\perp}M_\vp P_\clm M_{\bar\vp}P_{\clm^\perp}
&=
P_{\clm^\perp}M_\vp VV^*M_{\bar\vp}P_{\clm^\perp} \\
&=
(P_{\clm^\perp}M_\vp V)(V^*M_{\bar\vp}P_{\clm^\perp}) \\
&=
H_\vp^\clm(H_\vp^\clm)^*.
\end{align*}
Therefore
\[
D_{|\vp|^2}^\clm
=
D_\vp^\clm(D_\vp^\clm)^*
+
H_\vp^\clm(H_\vp^\clm)^*,
\]
which proves the first identity.

For the second identity, similarly,
\begin{align*}
D_{|\vp|^2}^\clm
&=
P_{\clm^\perp}M_{\bar\vp}
(P_{\clm^\perp}+P_\clm)
M_\vp P_{\clm^\perp} \\
&=
P_{\clm^\perp}M_{\bar\vp}P_{\clm^\perp}M_\vp P_{\clm^\perp}
+
P_{\clm^\perp}M_{\bar\vp}P_\clm M_\vp P_{\clm^\perp} \\
&=
D_{\bar\vp}^\clm D_\vp^\clm
+
P_{\clm^\perp}M_{\bar\vp}VV^*M_\vp P_{\clm^\perp} \\
&=
(D_\vp^\clm)^*D_\vp^\clm
+
(P_{\clm^\perp}M_{\bar\vp}V)(V^*M_\vp P_{\clm^\perp}) \\
&=
(D_\vp^\clm)^*D_\vp^\clm
+
H_{\bar\vp}^\clm(H_{\bar\vp}^\clm)^*.
\end{align*}
Rearranging gives the second identity.
\end{proof}

\begin{corollary}
If $\vp$ is unimodular, that is, $|\vp|=1$ almost everywhere on $\T$, then
\[
D_{|\vp|^2}^\clm=I_{\clm^\perp}.
\]
Consequently,
\[
I_{\clm^\perp}
-
D_\vp^\clm(D_\vp^\clm)^*
=
H_\vp^\clm(H_\vp^\clm)^*
\]
and
\[
I_{\clm^\perp}
-
(D_\vp^\clm)^*D_\vp^\clm
=
H_{\bar\vp}^\clm(H_{\bar\vp}^\clm)^*.
\]
\end{corollary}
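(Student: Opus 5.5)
The corollary follows directly from Theorem~\ref{thm:defect}. The first step is to identify $D_{|\vp|^2}^\clm$. Since $|\vp|=1$ a.e., the symbol $|\vp|^2$ equals the constant function $1$ a.e. on $\T$, so $M_{|\vp|^2}=I$ on $L^2(\T)$. Hence, for every $f\in\clm^\perp$,
\[
D_{|\vp|^2}^\clm f=P_{\clm^\perp}(f)=f,
\]
because $P_{\clm^\perp}$ acts as the identity on its own range. This gives $D_{|\vp|^2}^\clm=I_{\clm^\perp}$.

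Next, substitute this into the two identities of Theorem~\ref{thm:defect}. The first gives
\[
D_\vp^\clm(D_\vp^\clm)^*=I_{\clm^\perp}-H_\vp^\clm(H_\vp^\clm)^*,
\]
and the second gives
\[
(D_\vp^\clm)^*D_\vp^\clm=I_{\clm^\perp}-H_{\bar\vp}^\clm(H_{\bar\vp}^\clm)^*.
\]
Rearranging each equation yields the two displayed formulas. The hypothesis is symmetric in $\vp$ and $\bar\vp$, since $|\bar\vp|=1$ as well, so no separate argument is needed for the second identity.

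There is no real obstacle here. The only point worth stating explicitly is that an a.e. equality of symbols gives equality of the multiplication operators, so the constant-symbol compression is exactly the identity on $\clm^\perp$. As a consequence, both defect operators $I-D_\vp^\clm(D_\vp^\clm)^*$ and $I-(D_\vp^\clm)^*D_\vp^\clm$ are positive, which shows in passing that $D_\vp^\clm$ is a contraction.
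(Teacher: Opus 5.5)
Your proposal is correct and matches the paper's intended argument. The paper gives no separate proof; the corollary is an immediate consequence of Theorem~\ref{thm:defect}, using that $|\vp|^2=1$ a.e.\ forces $D_{|\vp|^2}^\clm=I_{\clm^\perp}$, which is exactly what you do. Your remark about symmetry in $\vp$ and $\bar\vp$ is unnecessary, since the second identity of Theorem~\ref{thm:defect} already has $D_{|\vp|^2}^\clm$ on its right-hand side, but it is harmless.
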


\subsection{Zero products}
\begin{lemma}\label{lem:hankel-shift-vanishing}
For every $\alpha\in L^\infty(\T)$,
\[
H_\alpha S^n\longrightarrow0
\]
strongly on $H^2$.
\end{lemma}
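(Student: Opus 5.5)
The plan is to prove the convergence first for trigonometric-polynomial symbols and then pass to a general $\alpha\in L^\infty(\T)$ by density in $L^2$ together with uniform boundedness. Fix $f\in H^2$. Since $H_\alpha S^n f=P_{-}(\alpha z^n f)$ and $\|H_\alpha S^n\|\le\|\alpha\|_\infty$ for all $n$, it is enough to show $\|P_-(\alpha z^n f)\|_2\to0$ for each fixed $f$ in a dense subset of $H^2$. The uniform bound then extends the convergence to all $f\in H^2$ by a standard $\varepsilon/3$ argument.

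\emph{Step 1: polynomial symbols and polynomial vectors.} First take $\alpha=\sum_{|j|\le N}c_j z^j$ a trigonometric polynomial and $f=p$ an analytic polynomial. Then $\alpha z^n p$ has Fourier support in $[n-N,\infty)$. So for $n>N$ it lies in $H^2$, and hence $P_-(\alpha z^np)=0$. This gives convergence for polynomial symbols on the dense set of analytic polynomials, and hence, by the uniform bound $\|H_\alpha S^n\|\le\|\alpha\|_\infty$, on all of $H^2$.

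\emph{Step 2: general symbols.} This is the only delicate point, because trigonometric polynomials are not norm-dense in $L^\infty$. I would avoid approximating $\alpha$ in $L^\infty$ and instead fix the vector first. Take $f=p$ an analytic polynomial, which is bounded. Choose trigonometric polynomials $\alpha_k\to\alpha$ in $L^2(\T)$, for instance Fejér means. Then
\[
\|P_-(\alpha z^np)\|_2\le \|P_-((\alpha-\alpha_k)z^np)\|_2+\|P_-(\alpha_k z^np)\|_2\le \|p\|_\infty\|\alpha-\alpha_k\|_2+\|P_-(\alpha_k z^np)\|_2 .
\]
Given $\varepsilon>0$, pick $k$ so that the first term is less than $\varepsilon$. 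By Step 1 the second term vanishes for large $n$. Hence $H_\alpha S^n p\to0$ for every analytic polynomial $p$.

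Finally, for an arbitrary $f\in H^2$, choose a polynomial $p$ with $\|f-p\|_2<\varepsilon$. Then
\[
\|H_\alpha S^n f\|\le\|\alpha\|_\infty\varepsilon+\|H_\alpha S^n p\|,
\]
which shows that $H_\alpha S^nf\to0$. The main obstacle is exactly the non-density issue in Step 2. It is resolved by exploiting the boundedness of polynomial vectors, which lets the $L^2$-approximation of $\alpha$ do the work. An alternative route would use the tail estimate $\|P_-(g z^n)\|_2^2=\sum_{j<-n}|\hat g(j)|^2\to0$ with $g=\alpha p\in L^2$, which gives the same conclusion directly.
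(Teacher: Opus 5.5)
Your proof is correct and essentially the paper's: both reduce to analytic polynomials $p$ via the uniform bound $\|H_\alpha S^n\|\le\|\alpha\|_\infty$, and both use only that $\alpha\in L^2(\T)$ forces the far negative Fourier tail to vanish. The paper does this directly with the Parseval estimate $\|P_-(\alpha z^{n+j})\|_2^2=\sum_{m<-n-j}|\widehat\alpha(m)|^2\to0$, which is your ``alternative route'' applied monomial by monomial. Your detour through trigonometric-polynomial symbols in Steps 1--2 is therefore valid but unnecessary, and the non-density of trigonometric polynomials in $L^\infty$ never becomes an obstacle.
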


\begin{proof}
Let
\[
p(z)=\sum_{j=0}^{d}c_jz^j
\]
be an analytic polynomial. Then
\[
H_\alpha S^np
=
\sum_{j=0}^{d}c_jP_-(\alpha z^{n+j}).
\]
Moreover,
\[
\|P_-(\alpha z^{n+j})\|_2^2
=
\sum_{m<-n-j}|\widehat\alpha(m)|^2
\longrightarrow0
\]
as $n\to\infty$, because $\alpha\in L^\infty(\T)\subset L^2(\T)$.
Therefore
\[
\|H_\alpha S^np\|\longrightarrow0.
\]

Since analytic polynomials are dense in $H^2$ and
\[
\|H_\alpha S^n\|\leq\|\alpha\|_\infty
\]
for every $n$, the conclusion follows by approximation.
\end{proof}

\begin{theorem}[Zero-product theorem]
\label{thm:zero-product}
Let $\clm=h\clk_u$ be a nonzero proper nearly $S^*$-invariant
subspace, where $u$ is a nonconstant inner function with $u(0)=0$.
For $\vp,\psi\in L^\infty(\T)$,
\[
D_\vp^\clm D_\psi^\clm=0
\]
if and only if
\[
\vp=0
\qquad\text{or}\qquad
\psi=0
\]
almost everywhere on $\T$.
\end{theorem}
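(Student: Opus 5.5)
The plan is to prove the nontrivial direction in two stages. First, I will show that $D_\vp^\clm D_\psi^\clm=0$ forces $\vp\psi=0$ a.e., by passing to the co-analytic corner and taking an asymptotic limit along shifts. Second, I will use the analytic structure of $\clm\subset H^2$, via the F.~and M.~Riesz uniqueness theorem, to upgrade $\vp\psi=0$ to $\vp=0$ or $\psi=0$. The converse is trivial: if $\vp=0$ or $\psi=0$, one factor is the zero operator.

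\emph{Step 1: $\vp\psi=0$ a.e.} Assume $D_\vp^\clm D_\psi^\clm=0$ and compress to $H_-^2\subset\clm^\perp$. Using $P_{\clm^\perp}=P_\cln+P_-$ (Theorem~\ref{thm:N_Hminus_block}), we get
\[
0=P_-D_\vp^\clm D_\psi^\clm|_{H_-^2}=S_\vp S_\psi+P_-M_\vp P_\cln M_\psi|_{H_-^2}.
\]
Let $Z=M_{\bar z}|_{H_-^2}$, an isometry on $H_-^2$, so that $Z^*=S_z$. I will multiply the identity above by $Z^{*n}$ on the left and $Z^n$ on the right and let $n\to\infty$.

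For the second term, $P_\cln M_\psi Z^n g=P_\cln P_+(\psi\bar z^ng)$. Transporting by the flip $U$, the quantity $P_+(\psi\bar z^n g)$ becomes a Hankel operator composed with $S^n$. Lemma~\ref{lem:hankel-shift-vanishing} then gives $P_+(\psi\bar z^n g)\to0$, so this term tends to $0$ strongly.

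For the first term, write $S_\vp S_\psi=S_{\vp\psi}-P_-M_\vp P_+M_\psi|_{H_-^2}$. The same Hankel–shift argument kills the correction term, and $Z^{*n}S_{\vp\psi}Z^n=S_{\vp\psi}$. Hence $S_{\vp\psi}=0$, so $\vp\psi=0$ a.e. by the unitary equivalence $S_{\vp\psi}\cong T_{\widetilde{\vp\psi}}$. I expect this limiting argument, mainly the bookkeeping of the flip $U$ to invoke Lemma~\ref{lem:hankel-shift-vanishing} on the co-analytic side, to be the main technical step.

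\emph{Step 2: one symbol vanishes.} Suppose, for contradiction, that both $\vp\neq0$ and $\psi\neq0$. Put $E=\{\vp\neq0\}$ and $F=\{\psi\neq0\}$; both have positive measure and, by Step 1, $E\cap F$ is null. For $f\in\clm^\perp$, we have $D_\vp^\clm D_\psi^\clm f=0$, which means $\vp P_{\clm^\perp}(\psi f)\in\clm$. Since $\vp\psi=0$,
\[
\vp P_{\clm^\perp}(\psi f)=-\vp P_\clm(\psi f),
\]
so $\vp g\in\clm\subset H^2$ for every $g$ in the range of $P_\clm M_\psi|_{\clm^\perp}$.

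The function $\vp g$ lies in $H^2$ and vanishes on $F$, a set of positive measure, so $\vp g=0$ a.e. Thus $g=0$ on $E$, which again has positive measure; since $g\in\clm\subset H^2$, it follows that $g=0$. Hence $P_\clm M_\psi|_{\clm^\perp}=(B_{\bar\psi}^\clm)^*=0$, so $B_{\bar\psi}^\clm=0$, i.e. $\bar\psi\clm\subset\clm$.

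Apply this to $h=h\cdot1\in\clm$ (recall $1\in\clk_u$). Then $\bar\psi h\in H^2$, and $\bar\psi h$ vanishes on the positive-measure set $E$, since $E\cap F$ is null. Therefore $\bar\psi h=0$ a.e. Because $h\neq0$ a.e. on $\T$, this gives $\psi=0$ a.e., contradicting $\psi\neq0$. This completes the plan.
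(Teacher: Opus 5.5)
Your proposal is correct, and its second half takes a genuinely different route from the paper.

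Your Step~1 is essentially the paper's argument. The paper tests $S_\vp S_\psi$ against $\bar z^n f,\bar z^n g$ and transfers to $T_{\widetilde\vp}T_{\widetilde\psi}$ via $U$, whereas you conjugate by the isometry $Z$ directly on $H_-^2$. Both rest on the tail estimate $\|P_+(\psi\bar z^n g)\|\to0$ and on the invariance $Z^{*n}S_{\vp\psi}Z^n=S_{\vp\psi}$.

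The routes diverge in passing from $\vp\psi=0$ to $\vp=0$ or $\psi=0$. The paper uses the semicommutator identity of Proposition~\ref{prop:weighted-semicommutator} to get $H_\vp^\clm(H_{\bar\psi}^\clm)^*=0$. It then proves $\ker H_\alpha^\clm=\ker B_\alpha$, deduces $B_\vp B_{\bar\psi}^*=0$ and hence $D_\vp D_\psi=0$ for the classical DTTOs, and cites the Ding--Sang zero-product theorem. You instead stay inside $\clm^\perp$. With $\vp\psi=0$, the hypothesis becomes $\vp P_\clm(\psi f)\in H^2$. Two applications of the boundary uniqueness theorem for $H^2$, on the disjoint positive-measure sets $F$ and $E$, give $(B_{\bar\psi}^\clm)^*=0$. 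Then $\bar\psi h\in H^2$ vanishes on $E$, which forces $\psi=0$.

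Your argument is self-contained and more general. It never invokes Ding--Sang; for $h\equiv1$ it actually reproves their theorem. It also uses only that $\clm$ is a nonzero closed subspace of $H^2$, so any nonzero element of $\clm$ could play the role of $h$. The paper's route, in exchange, displays the mechanism (the kernel identity $\ker H_\alpha^\clm=\ker B_\alpha$) by which weighted products transfer to classical ones, which fits the paper's comparison theme. The cost is dependence on an external result.
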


\begin{proof}
The converse implication is immediate. Suppose that
\[
D_\vp^\clm D_\psi^\clm=0.
\]
Put
\[
\cln=H^2\ominus\clm,
\qquad
\clm^\perp=\cln\oplus H_{-}^2.
\]
Since $H_{-}^2\subseteq\clm^\perp$, we have
\[
P_{\clm^\perp}P_-=P_-P_{\clm^\perp}=P_-.
\]
Consequently,
\begin{align*}
0
&=
P_-D_\vp^\clm D_\psi^\clm P_- \\
&=
P_-M_\vp P_{\clm^\perp}M_\psi P_- \\
&=
P_-M_\vp P_\cln M_\psi P_-
+
P_-M_\vp P_-M_\psi P_-.
\end{align*}
Thus
\begin{equation}\label{eq:zero-product-corner}
S_\vp S_\psi = - P_-M_\vp P_\cln M_\psi P_-,
\end{equation}
where
\[
S_\alpha=P_-M_\alpha\big|_{H_{-}^2}, \qquad \alpha\in L^\infty(\T).
\]

Let $f,g\in H_{-}^2$ and define
\[
f_n=\bar z^{\,n}f, \qquad g_n=\bar z^{\,n}g.
\]
Then \eqref{eq:zero-product-corner} gives
\[
\langle S_\vp S_\psi f_n,g_n\rangle
= - \langle P_-M_\vp P_\cln M_\psi f_n,g_n\rangle.
\]
Since $\cln\subseteq H^2$,
\[
\|P_\cln M_\psi f_n\| \leq \|P_+(\psi\bar z^{\,n}f)\|.
\]
Writing $w=\psi f\in L^2(\T)$, Parseval's identity yields
\[
\|P_+(w\bar z^{\,n})\|_2^2
= \sum_{k\geq0}|\widehat w(k+n)|^2
\longrightarrow0.
\]
Hence
\[
P_\cln M_\psi f_n\longrightarrow0
\]
in $L^2(\T)$, and therefore
\[
\lim_{n\to\infty}
\langle S_\vp S_\psi f_n,g_n\rangle = 0.
\]

Let
\[
U:H_{-}^2\to H^2, \qquad U(\bar z^k)=z^{k-1},
\quad k\geq1.
\]
Then
\[
US_\alpha U^*=T_{\widetilde\alpha},
\qquad \widetilde\alpha(z)=\alpha(\bar z),
\]
and
\[
Uf_n=z^nUf, \qquad Ug_n=z^nUg.
\]
It follows that
\[
\langle S_\vp S_\psi f_n,g_n\rangle
=
\left\langle
T_{\widetilde\vp}T_{\widetilde\psi}z^nUf,
z^nUg
\right\rangle.
\]
Using
\[
T_{\widetilde\vp}T_{\widetilde\psi}
=
T_{\widetilde\vp\widetilde\psi}
-
H_{\overline{\widetilde\vp}}^*
H_{\widetilde\psi}
\]
and
\[
S^{*n}T_\eta S^n=T_\eta,
\qquad \eta\in L^\infty(\T),
\]
we obtain
\begin{align*}
\left\langle
T_{\widetilde\vp}T_{\widetilde\psi}z^nUf,
z^nUg \right\rangle
&= \left\langle T_{\widetilde\vp\widetilde\psi}Uf,Ug \right\rangle - \left\langle H_{\overline{\widetilde\vp}}^*
H_{\widetilde\psi}z^nUf, z^nUg \right\rangle.
\end{align*}
By Lemma~\ref{lem:hankel-shift-vanishing}, the second term tends to zero. Hence
\[
\left\langle
T_{\widetilde\vp\widetilde\psi}Uf,
Ug
\right\rangle
=
0.
\]
Since $f,g\in H_{-}^2$ are arbitrary and $U$ is unitary,
\[
T_{\widetilde\vp\widetilde\psi}=0.
\]
A Toeplitz operator with bounded symbol is zero only when its symbol is
zero almost everywhere. Therefore
\[
\widetilde\vp\,\widetilde\psi=0
\quad\text{a.e.},
\]
and consequently
\begin{equation}\label{eq:symbol-product-zero}
\vp\psi=0
\quad\text{a.e. on }\T.
\end{equation}

By Proposition~\ref{prop:weighted-semicommutator},
\[
D_{\vp\psi}^\clm-D_\vp^\clm D_\psi^\clm
=
H_\vp^\clm(H_{\bar\psi}^\clm)^*.
\]
In view of the assumption and \eqref{eq:symbol-product-zero}, we obtain
\[
H_\vp^\clm(H_{\bar\psi}^\clm)^*=0.
\]

Let
\[
B_\alpha
=
P_{\clk_u^\perp}M_\alpha\big|_{\clk_u}
\]
be the classical big truncated Hankel operator. We claim that
\[
\ker H_\alpha^\clm=\ker B_\alpha.
\]
Indeed, for $k\in\clk_u$,
\begin{align*}
k\in\ker H_\alpha^\clm
&\iff
P_{\clm^\perp}(\alpha hk)=0 \\
&\iff
\alpha hk\in h\clk_u \\
&\iff
\alpha k\in\clk_u \\
&\iff
k\in\ker B_\alpha,
\end{align*}
where we have used that $h$ is nonzero almost everywhere.

It follows that
\[
\overline{\ran B_{\bar\psi}^*}
=
(\ker B_{\bar\psi})^\perp
=
(\ker H_{\bar\psi}^\clm)^\perp
=
\overline{\ran (H_{\bar\psi}^\clm)^*}.
\]
Since
\[
H_\vp^\clm(H_{\bar\psi}^\clm)^*=0,
\]
we therefore get
\[
\overline{\ran B_{\bar\psi}^*}
\subseteq
\ker B_\vp,
\]
and hence
\[
B_\vp B_{\bar\psi}^*=0.
\]

For the classical DTTOs,
\[
D_{\vp\psi}-D_\vp D_\psi
=
B_\vp B_{\bar\psi}^*.
\]
Together with \(\vp\psi=0\), this gives
\[
D_\vp D_\psi=0.
\]
The zero-product theorem of Ding and Sang
\cite[Theorem~3.4]{Sang:Ding} now implies that
\[
\vp=0
\qquad\text{or}\qquad
\psi=0
\]
almost everywhere on $\T$.
\end{proof}

\section{The generalized dual shift}
\begin{lemma}\label{lem:commutators}
Assume that inner products are linear in the first variable. The commutators of $P_\clm$ with the bilateral shifts satisfy
\[
[P_\clm,M_z]
=
h\otimes \bar{z} h
-
hu\otimes \bar{z} hu
\]
and
\[
[M_{\bar{z}},P_\clm]
=
\bar{z} h\otimes h
-
\bar{z} hu\otimes hu.
\]
Here
\[
(f\otimes g)x=\langle x,g\rangle f.
\]
In particular, these commutators have rank at most two.
\end{lemma}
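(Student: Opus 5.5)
The plan is to reduce the weighted commutator to the classical one for $P_u$, conjugated formally by $h$, and to make this rigorous on a dense subspace where the Hartmann--Ross formula of Lemma~\ref{lem:proj} is a genuine $L^2$ computation.

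First I would record the elementary identity $[P_+,M_z]x=\langle x,\bar z\rangle\,1$ for $x\in L^2(\T)$. Indeed, $P_+(zx)-zP_+x$ is just the $(-1)$-st Fourier coefficient $\widehat x(-1)$ times the constant $1$. Next I would use $P_u=P_+-M_uP_+M_{\bar u}$, which is bounded on $L^2(\T)$ since $u$ is inner. Since $M_u$ and $M_{\bar u}$ commute with $M_z$, this gives
\[
[P_u,M_z]x=\langle x,\bar z\rangle 1-\langle \bar u x,\bar z\rangle u
=\bigl(1\otimes\bar z-u\otimes \bar z u\bigr)x .
\]

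Second, I would restrict to the dense subspace $L^\infty(\T)\subset L^2(\T)$. For $x\in L^\infty$ we have $\bar h x\in L^2$, because $h\in H^2$. For such $x$ the reproducing-kernel pairing $\langle x,hk_\lambda^u\rangle$ coincides with the $L^2$ inner product $\langle \bar h x,k_\lambda^u\rangle$. Hence $V^*x=P_u(\bar h x)$ with $P_u$ the genuine bounded projection, and likewise for $zx\in L^\infty$. Then
\[
P_\clm(zx)-zP_\clm x=h\bigl(P_u(z\bar h x)-zP_u(\bar h x)\bigr)=h\,[P_u,M_z](\bar h x).
\]
Substituting the classical formula and using $\langle \bar h x,\bar z\rangle=\langle x,\bar z h\rangle$ and $\langle \bar h\bar u x,\bar z\rangle=\langle x,\bar z hu\rangle$ gives
\[
[P_\clm,M_z]x=\langle x,\bar z h\rangle h-\langle x,\bar z hu\rangle hu .
\]
All four vectors $h$, $hu$, $\bar z h$, $\bar z hu$ lie in $L^2(\T)$, so the right-hand side is a bounded operator of rank at most two. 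The left-hand side is also bounded. Since the two bounded operators agree on the dense set $L^\infty(\T)$, they agree everywhere.

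Third, the second identity follows by taking adjoints. We have $[P_\clm,M_z]^*=[M_{\bar z},P_\clm]$, since $P_\clm$ is self-adjoint and $M_z^*=M_{\bar z}$. Also $(f\otimes g)^*=g\otimes f$. These give $\bar z h\otimes h-\bar z hu\otimes hu$.

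The main obstacle is the second step. Because $h$ may be unbounded, one cannot write $P_\clm=M_hP_uM_{\bar h}$ as a composition of bounded operators, so the interpretation of $P_u(\bar h x)$ must be justified. The choice of the dense class $L^\infty$, where $\bar h x\in L^2$ and the pairing of Lemma~\ref{lem:proj} reduces to an honest projection, handles this. Boundedness of both sides then closes the argument.
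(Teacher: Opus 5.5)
Your proposal is correct and follows essentially the same route as the paper. It restricts to the dense class $L^\infty(\T)$, where $\bar h x\in L^2$ and $P_\clm x=hP_u(\bar h x)$ is a genuine computation, applies the classical commutator $[P_u,M_z]=1\otimes\bar z-u\otimes\bar z u$ to $\bar h x$, extends by boundedness, and takes adjoints. You additionally derive that classical identity from $[P_+,M_z]$ and $P_u=P_+-M_uP_+M_{\bar u}$, and you justify why the reproducing-kernel pairing reduces to the honest projection for $x\in L^\infty(\T)$; the paper leaves both steps implicit.
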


\begin{proof}
It is enough to prove the first identity on the dense subspace
$L^\infty(\T)\subset L^2(\T)$, since both sides define bounded operators on $L^2(\T)$.

Let $f\in L^\infty(\T)$. Then $\bar h f\in L^2(\T)$, and the classical identity
\[
[P_u,M_z]=1\otimes \bar z-u\otimes \bar z u
\]
may be applied to $\bar h f$. Using the projection formula
\[
P_\clm f=hP_u(\bar h f),
\]
we get
\begin{align*}
[P_\clm,M_z]f
&=
P_\clm(zf)-zP_\clm f  \\
&=
hP_u(z\bar h f)-zhP_u(\bar h f) \\
&=
h\big(P_u(z\bar h f)-zP_u(\bar h f)\big) \\
&=
h\langle \bar h f,\bar z\rangle
-
hu\langle \bar h f,\bar z u\rangle  \\
&=
h\langle f,\bar z h\rangle
-
hu\langle f,\bar zhu\rangle.
\end{align*}
Thus
\[
[P_\clm,M_z]
=
h\otimes \bar zh-hu\otimes \bar zhu.
\]
Taking adjoints gives
\[
[M_{\bar z},P_\clm]
=
\bar zh\otimes h-\bar zhu\otimes hu.
\]
\end{proof}

Let
\[
S_{\clm^\perp}=P_{\clm^\perp}M_z\big|_{\clm^\perp}.
\]
Then
\[
S_{\clm^\perp}^*
=
P_{\clm^\perp}M_{\bar{z}}\big|_{\clm^\perp}.
\]
Observe that
\[
S_{\clm^\perp}=D_z^\clm.
\]
\begin{theorem}\label{thm:shift_defect}
For $\vp\in L^\infty(\T)$, the operator $D_\vp^\clm$ satisfies
\[
S_{\clm^\perp}^*D_\vp^\clm S_{\clm^\perp}
-
D_\vp^\clm
=
\Delta_1+\Delta_2,
\]
where
\[
\Delta_1
=
-
P_{\clm^\perp}(\bar{z} h)\otimes \bar{z}\bar\vp h
+
P_{\clm^\perp}(\bar{z} hu)\otimes \bar{z}\bar\vp hu,
\]
and
\[
\Delta_2
=
-
S_{\clm^\perp}^*P_{\clm^\perp}(\vp h)\otimes \bar{z} h
+
S_{\clm^\perp}^*P_{\clm^\perp}(\vp hu)\otimes \bar{z} hu.
\]
Consequently,
\[
\operatorname{rank}
\left(
S_{\clm^\perp}^*D_\vp^\clm S_{\clm^\perp} - D_\vp^\clm
\right) \leq 4.
\]
\end{theorem}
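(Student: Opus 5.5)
The plan is a direct computation. We conjugate $D_\vp^\clm$ by the compressed shift and use Lemma~\ref{lem:commutators} twice: once to move $P_{\clm^\perp}$ past $M_z$, and once to move it past $M_{\bar z}$. Every error term produced this way is a rank-one operator built from $h$ and $hu$. Throughout, $P_{\clm^\perp}=I-P_\clm$, and since $\clm^\perp$ is the range of $P_{\clm^\perp}$ we have $P_{\clm^\perp}f=f$ for $f\in\clm^\perp$.

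\emph{Step 1 (the right factor).} Fix $f\in\clm^\perp$. We have $P_{\clm^\perp}M_z=M_zP_{\clm^\perp}+[P_\clm,M_z]$, because $[P_{\clm^\perp},M_z]=-[P_\clm,M_z]$. Hence
\[
S_{\clm^\perp}f=zf+[P_\clm,M_z]f=zf+\langle f,\bar zh\rangle h-\langle f,\bar zhu\rangle hu .
\]
Applying $P_{\clm^\perp}M_\vp$ gives
\[
D_\vp^\clm S_{\clm^\perp}f=P_{\clm^\perp}(\vp zf)+\langle f,\bar zh\rangle P_{\clm^\perp}(\vp h)-\langle f,\bar zhu\rangle P_{\clm^\perp}(\vp hu).
\]
After we apply $S_{\clm^\perp}^*$, the last two terms should produce exactly $\Delta_2$. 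The signs have to be checked carefully against the stated formula, which carries a minus sign on the $h$-term; the sign convention for $[P_\clm,M_z]$ in Lemma~\ref{lem:commutators} is the thing to watch.

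\emph{Step 2 (the left factor).} Next we compute $S_{\clm^\perp}^*P_{\clm^\perp}(\vp zf)=P_{\clm^\perp}M_{\bar z}P_{\clm^\perp}(\vp zf)$. Write $P_{\clm^\perp}=I-P_\clm$ in the middle. Since $P_{\clm^\perp}P_\clm=0$, we get
\[
P_{\clm^\perp}M_{\bar z}P_\clm=P_{\clm^\perp}[M_{\bar z},P_\clm]=P_{\clm^\perp}\bigl(\bar zh\otimes h-\bar zhu\otimes hu\bigr).
\]
Therefore
\[
S_{\clm^\perp}^*P_{\clm^\perp}(\vp zf)=D_\vp^\clm f-\langle \vp zf,h\rangle P_{\clm^\perp}(\bar zh)+\langle \vp zf,hu\rangle P_{\clm^\perp}(\bar zhu).
\]
Using $\langle \vp zf,h\rangle=\langle f,\bar z\bar\vp h\rangle$, and likewise for $hu$, the last two terms are $\Delta_1 f$. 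Subtracting $D_\vp^\clm f$ then gives the identity.

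\emph{Well-definedness and rank.} All vectors appearing here lie in $L^2(\T)$: $h$, $hu$, $\bar z h$ and $\vp h$ are in $L^2$ because $h\in H^2$ and $\vp,u\in L^\infty$. So each $x\otimes y$ is a bounded rank-one operator, and the identity holds on all of $\clm^\perp$, with no density argument needed beyond the one already built into Lemma~\ref{lem:commutators}. Each of $\Delta_1$ and $\Delta_2$ is a sum of two rank-one operators, so the rank bound $\leq4$ follows at once. I expect the main obstacle to be the sign and adjoint bookkeeping: tracking which commutator, $[P_\clm,M_z]$ or $[P_{\clm^\perp},M_z]$, enters at each stage, and using the convention $(x\otimes y)g=\langle g,y\rangle x$ consistently so that the signs match those stated for $\Delta_1$ and $\Delta_2$.
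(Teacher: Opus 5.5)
Your plan is the paper's own argument, and your Step 2 matches it line for line, correctly giving $S_{\clm^\perp}^*P_{\clm^\perp}(\vp zf)=D_\vp^\clm f+\Delta_1 f$. The problem is in Step 1. The sign mismatch you flagged does not come from the convention in Lemma~\ref{lem:commutators}, which is correct; it comes from your own rearrangement. From $[P_{\clm^\perp},M_z]=-[P_\clm,M_z]$ one gets
\[
P_{\clm^\perp}M_z=M_zP_{\clm^\perp}+[P_{\clm^\perp},M_z]=M_zP_{\clm^\perp}-[P_\clm,M_z],
\]
not $M_zP_{\clm^\perp}+[P_\clm,M_z]$. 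More directly: for $f\in\clm^\perp$ we have $P_\clm f=0$, so $P_\clm(zf)=[P_\clm,M_z]f$, and hence
\[
S_{\clm^\perp}f=zf-P_\clm(zf)=zf-\langle f,\bar zh\rangle h+\langle f,\bar zhu\rangle hu .
\]
The two extra terms in $D_\vp^\clm S_{\clm^\perp}f$ are then $-\langle f,\bar zh\rangle P_{\clm^\perp}(\vp h)+\langle f,\bar zhu\rangle P_{\clm^\perp}(\vp hu)$. Applying $S_{\clm^\perp}^*$ to them gives exactly $\Delta_2 f$, as in the paper.

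This is not a cosmetic issue. With your sign, the computation yields $S_{\clm^\perp}^*D_\vp^\clm S_{\clm^\perp}-D_\vp^\clm=\Delta_1-\Delta_2$, which is false in general. For instance, take $h\equiv1$ and $u=z$, so that $\clm$ is the constants and $P_{\clm^\perp}g=g-\widehat g(0)$, and take $\vp=\bar z$, $f=\bar z$. Then $S_{\clm^\perp}f=P_{\clm^\perp}(1)=0$, so the left side applied to $f$ equals $-P_{\clm^\perp}(\bar z^{2})=-\bar z^{2}$. Meanwhile $\Delta_1f=0$ and $\Delta_2f=-\langle\bar z,\bar z\rangle S_{\clm^\perp}^*\bar z=-\bar z^{2}$, so only $\Delta_1+\Delta_2$ matches. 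Your Step 1 formula would even give $S_{\clm^\perp}\bar z=2$, which does not lie in $\clm^\perp$. The rank bound $\leq4$ does not depend on the sign, but the identity does. Once the sign in Step 1 is corrected, your proof is complete and coincides with the paper's.
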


\begin{proof}
Let $f\in\clm^\perp$. Since $P_\clm f=0$, we have
\[
S_{\clm^\perp}f = P_{\clm^\perp}(zf)
= zf-P_\clm(zf) = zf-[P_\clm,M_z]f.
\]
By Lemma~\ref{lem:commutators},
\[
S_{\clm^\perp}f =
zf - \langle f,\bar{z} h\rangle h
+ \langle f,\bar{z} hu\rangle hu.
\]
Applying $D_\vp^\clm$ gives
\[
D_\vp^\clm S_{\clm^\perp}f
=
P_{\clm^\perp}(\vp zf)
-
\langle f,\bar{z} h\rangle P_{\clm^\perp}(\vp h)
+
\langle f,\bar{z} hu\rangle P_{\clm^\perp}(\vp hu).
\]

We first apply $S_{\clm^\perp}^*$ to the first term. Since
\[
S_{\clm^\perp}^*
=
P_{\clm^\perp}M_{\bar{z}}\big|_{\clm^\perp},
\]
we have
\begin{align*}
S_{\clm^\perp}^*P_{\clm^\perp}(\vp zf)
&=
P_{\clm^\perp}M_{\bar{z}}P_{\clm^\perp}(\vp zf) \\
&=
P_{\clm^\perp}M_{\bar{z}}(I-P_\clm)(\vp zf) \\
&=
P_{\clm^\perp}(\vp f)
-
P_{\clm^\perp}M_{\bar{z}}P_\clm(\vp zf).
\end{align*}
Using
\[
M_{\bar{z}}P_\clm
=
[M_{\bar{z}},P_\clm]+P_\clm M_{\bar{z}}
\]
and
\[
P_{\clm^\perp}P_\clm=0,
\]
we get
\[
P_{\clm^\perp}M_{\bar{z}}P_\clm(\vp zf)
=
P_{\clm^\perp}[M_{\bar{z}},P_\clm](\vp zf).
\]
Therefore, by Lemma~\ref{lem:commutators},
\begin{align*}
&-P_{\clm^\perp}[M_{\bar{z}},P_\clm](\vp zf) \\
&=
-
\langle \vp zf,h\rangle P_{\clm^\perp}(\bar{z} h)
+
\langle \vp zf,hu\rangle P_{\clm^\perp}(\bar{z} hu) \\
&=
-
\langle f,\bar{z}\bar\vp h\rangle P_{\clm^\perp}(\bar{z} h)
+
\langle f,\bar{z}\bar\vp hu\rangle P_{\clm^\perp}(\bar{z} hu).
\end{align*}
Thus
\[
S_{\clm^\perp}^*P_{\clm^\perp}(\vp zf) = D_\vp^\clm f+\Delta_1 f.
\]

Next, applying $S_{\clm^\perp}^*$ to the remaining two terms gives
\begin{align*}
& S_{\clm^\perp}^*
\left( -\langle f,\bar{z} h\rangle P_{\clm^\perp}(\vp h)
+ \langle f,\bar{z} hu\rangle P_{\clm^\perp}(\vp hu) \right) \\
&= -\langle f,\bar{z} h\rangle S_{\clm^\perp}^*P_{\clm^\perp}(\vp h)
+ \langle f,\bar{z} hu\rangle S_{\clm^\perp}^*P_{\clm^\perp}(\vp hu) \\
&= \Delta_2 f.
\end{align*}

Combining the two parts, we obtain
\[
S_{\clm^\perp}^*D_\vp^\clm S_{\clm^\perp}f
= D_\vp^\clm f+\Delta_1f+\Delta_2f.
\]
Hence
\[
S_{\clm^\perp}^*D_\vp^\clm S_{\clm^\perp}
- D_\vp^\clm = \Delta_1+\Delta_2.
\]
Since each $\Delta_i$ has rank at most two, the defect has rank at most four.
\end{proof}

\subsection{Properties of the generalized dual shift}

The classical dual shift
\[
D_z^u=(I-P_u)M_z\big|_{\clk_u^\perp}
\]
acts on the orthogonal complement of the model space. In the present setting we replace $\clk_u^\perp$ by
\[
\clm^\perp=(h\clk_u)^\perp,
\]
and the extremal function $h$ enters the operator through the projection formula for $P_\clm$.

We first record the explicit action of the generalized dual shift.

\begin{proposition}\label{prop:dual_shift_action}
Let $\clm=h\clk_u$ be a nearly $S^*$-invariant subspace. For $f\in\clm^\perp$, the generalized dual shift
\[
D_z^\clm=P_{\clm^\perp}M_z\big|_{\clm^\perp}
\]
is given by
\[
D_z^\clm f
=
zf-\langle f,\bar{z} h\rangle h
+
\langle f,\bar{z} hu\rangle hu.
\]
\end{proposition}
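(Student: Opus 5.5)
The plan is to derive the formula directly from the definition of $D_z^\clm$ together with the commutator identity of Lemma~\ref{lem:commutators}, exactly as was done at the start of the proof of Theorem~\ref{thm:shift_defect}. Fix $f\in\clm^\perp$. Since $P_{\clm^\perp}=I-P_\clm$, we write
\[
D_z^\clm f=P_{\clm^\perp}(zf)=zf-P_\clm(zf).
\]
The orthogonality $f\perp\clm$ gives $P_\clm f=0$, so that $zP_\clm f=0$. Hence
\[
P_\clm(zf)=P_\clm M_zf-M_zP_\clm f=[P_\clm,M_z]f .
\]

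We then substitute the rank-two formula
\[
[P_\clm,M_z]=h\otimes\bar z h-hu\otimes\bar z hu
\]
from Lemma~\ref{lem:commutators}. With the convention $(f\otimes g)x=\langle x,g\rangle f$, this yields
\[
P_\clm(zf)=\langle f,\bar z h\rangle h-\langle f,\bar z hu\rangle hu ,
\]
and hence
\[
D_z^\clm f=zf-\langle f,\bar z h\rangle h+\langle f,\bar z hu\rangle hu .
\]

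The only point needing care is that all terms make sense when $h$ is unbounded. The vectors $\bar z h$ and $\bar z hu$ lie in $L^2(\T)$ because $h\in H^2$ and $|u|=|z|=1$, so the pairings are genuine $L^2$ inner products. The identity of Lemma~\ref{lem:commutators} is already an identity of bounded operators on all of $L^2(\T)$, having been established on the dense set $L^\infty(\T)$ and extended by continuity. So no further approximation argument is required; this is the step I would double-check, but it is inherited from the lemma.
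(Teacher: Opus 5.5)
Your proposal is correct and follows the paper's proof essentially step for step. You write $D_z^\clm f=zf-P_\clm(zf)$, use $P_\clm f=0$ to identify $P_\clm(zf)=[P_\clm,M_z]f$, and substitute the rank-two formula of Lemma~\ref{lem:commutators}. As an aside, since $u(0)=0$ one has $\bar z u\in\clk_u$, so $\bar z hu=h(\bar z u)\in\clm$, and the last term $\langle f,\bar z hu\rangle hu$ in fact vanishes for every $f\in\clm^\perp$.
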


\begin{proof}
Let $f\in\clm^\perp$. Since $P_\clm f=0$, we have
\[
D_z^\clm f
=
P_{\clm^\perp}(zf)
=
zf-P_\clm(zf).
\]
Moreover,
\[
P_\clm(zf)
=
[P_\clm,M_z]f,
\]
because $M_zP_\clm f=0$. By Lemma~\ref{lem:commutators},
\[
[P_\clm,M_z]
=
h\otimes \bar{z} h
-
hu\otimes \bar{z} hu.
\]
Hence
\[
P_\clm(zf)
=
\langle f,\bar{z} h\rangle h
-
\langle f,\bar{z} hu\rangle hu.
\]
Therefore,
\[
D_z^\clm f
=
zf-\langle f,\bar{z} h\rangle h
+
\langle f,\bar{z} hu\rangle hu.
\]
\end{proof}
Viewing both sides as operators from $\clm^\perp$ into $L^2(\T)$,
we have
\[
D_z^\clm-\restr{M_z}{\clm^\perp}
=
-h\otimes\bar zh
+
hu\otimes\bar zhu.
\]
Consequently,
\[
\operatorname{rank}
\left( D_z^\clm-\restr{M_z}{\clm^\perp} \right) \leq2.
\]
\begin{remark}
When $h\equiv 1$, we have $\clm=\clk_u$ and
$\clm^\perp=\clk_u^\perp$. In this case the formula becomes
\[
D_z^u f
=
zf-\langle f,\bar{z}\rangle 1
+
\langle f,\bar{z} u\rangle u.
\]
Because $u(0)=0$, the function $\bar zu$ belongs to $H^2$. Moreover, for
every $g\in H^2$,
\[
\langle ug,\bar zu\rangle=\langle zg,1\rangle=0,
\]
and $\bar zu\perp H_{-}^2$. Thus $\bar zu\in\clk_u$, so
\[
\langle f,\bar zu\rangle=0,
\qquad f\in\clk_u^\perp.
\]
Consequently,
\[
D_z^u f=zf-\langle f,\bar z\rangle 1,
\qquad f\in\clk_u^\perp,
\]
which is the usual single rank-one correction formula for the classical dual shift. In the weighted setting, the two correction terms involving $h$ and $hu$ need not collapse, so the compression of $M_z$ involves at most two rank-one correction terms.
\end{remark}

\section*{Declarations}
\noindent\textbf{Funding.} Not applicable.\\
\noindent\textbf{Conflict of interest.} The author declares no conflicts of interest.\\
\noindent\textbf{Data availability.} This work is purely theoretical.

\end{document}